\def\blfootnote{\xdef\@thefnmark{}\@footnotetext}
\def\l@subsection{\@tocline{2}{0pt}{2pc}{6pc}{}} \makeatother
\newtheorem{thm}{Theorem}[section]
\newtheorem{cor}[thm]{Corollary}
\newtheorem{lem}[thm]{Lemma}
\newtheorem{prop}[thm]{Proposition}
\theoremstyle{definition}
\newtheorem{defn}[thm]{Definition}
\theoremstyle{remark}
\newtheorem{rem}[thm]{Remark}
\newtheorem{ex}[thm]{Example}
\newfont{\eufm}{eufm10}
\renewcommand{\phi}{\varphi}
\newcommand{\N}{\mathbb N}
\newcommand{\Z}{\mathbb Z}
\newcommand{\Map}{\operatorname{Map}}
\newcounter{ncomments}
\newcounter{pcomments}
\newcounter{ccomments}
\let\@wraptoccontribs\wraptoccontribs
\def\mc {\mathcal}
\begin{document}

\title{Shifts maps are not type-preserving}
\author{Carolyn Abbott}
\address{Department of Mathematics\\Brandeis University\\Waltham, MA 02453}
\email{carolynabbott@brandeis.edu}

\author{Nicholas Miller}
\address{Department of Mathematics\\University of Oklahoma\\Norman, OK 73019}
\email{nickmbmiller@ou.edu}

\author{Priyam Patel}
\address{Department of Mathematics\\University of Utah\\Salt Lake City, UT 84112}
\email{patelp@math.utah.edu}

%\date{}

\begin{abstract}
 For a surface $S$ of sufficient complexity, Dehn twists act elliptically on the arc, curve, and relative arc graph of $S$. We show that composing a Dehn twist with a shift map results in a loxodromic isometry of the relative arc graph $\mathcal{A}(S,p)$ for any surface $S$ with an isolated puncture $p$ admitting a shift map. Therefore, shift maps are not type-preserving.
\end{abstract}

\maketitle
%\tableofcontents

\section{Introduction} 

A surface $S$ is \emph{finite-type} if its fundamental group is finitely generated, and is otherwise \emph{infinite-type}. The \emph{mapping class group}, $\Map(S)$, of a finite-type surface is well studied, especially through its actions on various hyperbolic graphs including the curve graph, $\mathcal{C}(S)$. The most simple mapping class, a Dehn twist about a simple closed curve, acts elliptically on $\mathcal{C}(S)$.

There have been many developments in the study of infinite-type surfaces and their mapping class groups over the last few years. For an infinite-type surface $S$ with at least one isolated puncture $p$, the \textit{relative arc graph}, $\mathcal{A}(S,p)$, plays the role of $\mathcal{C}(S)$ and is defined as follows: the vertices correspond to isotopy classes of simple arcs that begin and end at $p$, and edges connect vertices for arcs admitting disjoint representatives. The subgroup $\Map(S, p)$ of $\Map(S)$ that fixes the isolated puncture $p$ acts on $\mathcal{A}(S, p)$ by isometries. A Dehn twist about a simple closed curve acts elliptically on $\mathcal{A}(S, p)$ as well. 

This paper fits into a body of work aimed at constructing and classifying all of the elements of $\Map(S,p)$ acting loxodromically on $\mathcal{A}(S,p)$ and various other hyperbolic graphs associated to infinite-type surfaces (see \cite{Bavard, BavardWalker, AMP, PatelTaylor, MoralesValdez}). Our main result shows that \emph{shift maps} are not type-preserving in the sense that composing a Dehn twist with a shift map  results in a mapping class thats acts loxodromically on $\mathcal{A}(S,p)$.

\begin{thm}\label{thm:main}
Let $\chi$ be a standard simple closed curve in the biinfinite flute surface $S$ containing $\ell$ punctures besides $p$ in its interior. Then $g = h T_\chi^j$ is a  loxodromic isometry of $\mc A(S,p)$ for all $j>0$, unless both $\ell$ and $j$ are equal to 1, where $h$ is the standard shift on $S$. 
\end{thm}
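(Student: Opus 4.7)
The plan is to prove loxodromicity of $g$ by exhibiting a specific arc $\alpha \in \mathcal{A}(S,p)$ whose orbit $\{g^n\alpha\}$ traces out a quasi-geodesic, using a subsurface projection argument.  The starting point is the algebraic identity $hT_\chi^j h^{-1} = T_{h\chi}^j$, which lets us rewrite
\[
g^n \;=\; h^n \cdot T_{h^{-(n-1)}\chi}^j \cdot T_{h^{-(n-2)}\chi}^j \cdots T_\chi^j.
\]
In other words, the action of $g^n$ on $\alpha$ decomposes as a sequence of $j$-fold Dehn twists along the translates $\chi, h^{-1}\chi, \ldots, h^{-(n-1)}\chi$, followed by a single shift $h^n$.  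The goal is to show that each of these $n$ twist contributions leaves a detectable and additive imprint on $d_{\mathcal{A}(S,p)}(\alpha, g^n\alpha)$.

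To extract the lower bound, I would fix $\alpha$ to be an arc based at $p$ that meets every translate $h^{-k}\chi$ essentially over the relevant range of $k$.  For each $0 \le k \le n-1$, the $j$-fold twist $T^j_{h^{-k}\chi}$ contributes a controlled amount to a suitable projection — for instance, the annular projection to $A_{h^{-k}\chi}$, which satisfies $d_{A_{h^{-k}\chi}}(\alpha, g^n\alpha) \ge j + O(1)$ whenever $\alpha$ crosses $h^{-k}\chi$ and the twist contributions from the remaining factors act on disjoint (or coarsely independent) loci.  Equivariance of subsurface projection under $h$ intertwines these $n$ projections, and a Behrstock-type inequality ensures that contributions from distinct translates do not cancel beyond a uniform additive error.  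Combined with the Lipschitz property of subsurface projection out of $\mathcal{A}(S,p)$, this promotes the projection data to a linear lower bound $d_{\mathcal{A}(S,p)}(\alpha, g^n\alpha) \ge cn$, yielding positive translation length.

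The main obstacle is controlling the interaction between the translates $h^{-k}\chi$, which are \emph{not} pairwise disjoint: each encloses $p$, so consecutive curves $h^{-k}\chi$ and $h^{-(k+1)}\chi$ must intersect, and one must carefully verify that the twist contributions along them align rather than cancel.  This is precisely where the hypothesis $(\ell, j) \ne (1,1)$ enters.  When $\ell = 1$, the curve $\chi$ bounds a four-holed sphere around $p$, $T_\chi$ is a "short" twist whose projection is only barely nontrivial, and the intersection pattern between $\chi$ and its shifts is maximally degenerate; when in addition $j = 1$, the twist strength of each factor is small enough to be absorbed by the interference between consecutive translates, so $hT_\chi$ moves the basepoint by only a bounded amount per iteration and behaves parabolically rather than loxodromically.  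For $\ell \ge 2$ or $j \ge 2$, either additional punctures inside $\chi$ or additional twist power provides enough slack to survive the interactions, and the chain of projections accumulates linearly as required.  Verifying this numerical threshold — and in particular the required Behrstock-type inequality for the intersecting family $\{h^{-k}\chi\}$ — is the delicate technical step that will close the argument.
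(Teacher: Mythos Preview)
Your outline has a genuine structural gap: you never explain how the $n$ separate annular projection estimates combine into a \emph{linear} lower bound on $d_{\mathcal{A}(S,p)}(\alpha,g^n\alpha)$.  A Lipschitz projection (your phrase) only tells you that one large annular projection forces a large distance in $\mathcal{A}(S,p)$; it does not let you sum contributions across $n$ different annuli.  What you would actually need is a Masur--Minsky--style distance formula or a bounded geodesic image theorem for $\mathcal{A}(S,p)$, neither of which is available for infinite-type surfaces.  The Behrstock inequality you invoke also cuts the wrong way: for intersecting core curves it says that if the projection to one annulus is large then the projection to the other is \emph{bounded}, so it does not obviously let the contributions from the overlapping family $\{h^{-k}\chi\}$ accumulate.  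As written, the argument produces at best $d_{\mathcal{A}(S,p)}(\alpha,g^n\alpha)\ge j+O(1)$ for each $n$, which is no lower bound at all.

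A smaller but telling issue: in the excluded case $(\ell,j)=(1,1)$ the element $g=hT_\chi$ is not parabolic but \emph{elliptic} --- it preserves an infinite simplex of pairwise disjoint arcs --- so your heuristic that the twist strength is merely ``absorbed'' into bounded-per-step motion misidentifies the phenomenon.

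For comparison, the paper avoids projection machinery entirely.  It encodes arcs by combinatorial words, defines an integer-valued ``starts like'' function $\phi$ on $\mathcal{A}(S,p)$ measuring how far an arc fellow-travels the orbit $\alpha_k=g^k\alpha_0$, and proves by a direct (and somewhat intricate) case analysis on how $T_\chi$ and $h$ act on initial segments of codes that disjoint arcs have $\phi$-values differing by at most $3\ell$.  This immediately gives $d_{\mathcal{A}(S,p)}(\alpha_0,\alpha_n)\ge n/(3\ell)$ without importing any hierarchy theory, and the hypothesis $(\ell,j)\neq(1,1)$ enters concretely in one step of that case analysis.
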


\noindent For simplicity, we prove the theorem for the biinfinite flute surface $S$ and \emph{standard} curves (see Definition~\ref{def:standard}), but the result immediately extends to any surface $\Sigma$ containing an isolated puncture that admits a shift map since the inclusion of $\mc{A}(S, p)$ into $\mc{A}(\Sigma, p)$ is a $(2,0)$--quasi-isometric embedding. There are uncountably many such surfaces $\Sigma$, which are referred to as surfaces of type $\mathcal{S}$ (see \cite[Definition 2.6, Lemma 2.7, and Lemma 2.10]{AMP} for more details). In addition, Lemma~\ref{lem:standardscc} shows that we can extend Theorem~\ref{thm:main} to other simple closed curves as well. 

The proof that these mapping classes are loxodromic isometries of $\mc A(S,p)$ utilizes a ``starts like'' function, which (roughly) measures how long any arc starting at $p$ fellow travels arcs in a given collection. The function is then used to bound distance in $\mathcal{A}(S, p)$ from below and produce a quasi-axis for $g$.
 This method is inspired by Bavard's construction in \cite{Bavard} and that of the authors in \cite{AMP}.\\

\noindent\textbf{Acknowledgements:} Abbott was partially supported by NSF Grant DMS-2106906.  Miller was partially supported by NSF Grant DMS-2005438/2300370. Patel was partially supported by NSF CAREER Grant DMS–2046889 and the University of Utah Faculty Fellows Award.

\section{Background}
\subsection{Coding arcs and standard position}\label{sec:codingarcs}

Let  $S$ be the biinfinite flute surface with a distinguished isolated puncture $p$, and let $\{p_i\}_{i\in\Z}$ be the countably infinite discrete collection of all other punctures on $S$ which exit both ends of the cylinder. We move the distinguished puncture $p$ so that it lies to the right of $p_{-1}$ and to the left of $p_0$.
We also choose one non-isolated end of $S$ to correspond to the left direction (the accumulation point of $p_i$ for $i < 0$) and one to correspond to the right direction (the accumulation point of $p_i$ for $i > 0$), which gives a well-defined notion of a front and back of the cylinder for $S$. 

Just as in \cite{AMP}, we fix a complete hyperbolic metric on $S$ and let $B_0$ be a horocycle at a height sufficiently far out the cusp corresponding to $p_0$. Fix a shift map $h$ on $S$ whose domain contains exactly the collection $\{p_i\}$ for $i \in \mathbb{Z}$ and which shifts $p_i$ to $p_{i+1}$ for all $i \in \Z$.

\begin{defn}\label{def:B_i}
Define the simple closed curves $B_i:=h^i(B_0)$ for $i\in \mathbb Z$.  Then $B_i$ is a simple closed curve bounding the puncture $p_i$.  
We identify each $B_i$ with $\mathbb S^1$ and fix the north pole of each $B_i$.
\end{defn}

\subsection{Coding arcs}\label{sec:thecode}

Suppose $\gamma$ is an oriented arc on $S$ starting and ending at $p$.   We code $\gamma$ exactly as in \cite{AMP}. For the sake of brevity, we give the following examples of arcs and their codes instead of discussing the code in detail.

\begin{ex}  \label{ex:codes}
Consider the arcs shown in Figure \ref{fig:examplecodes}.  The elements $k\in\Z$ shown under $S$ denote the subscript on the simple closed curves $B_k$.  The code for $\alpha$ is $P_s0_o1_u2_o2_u1_u0_uP_s$, the code for $\beta$ is  $P_sP_uP_o0_o1_o2_o2_u1_o0_oP_s$, the code for $\gamma$ is given by $P_s(-1)_o(-2)_o(-2)_u(-1)_uP_u0_u1_u1_o0_oP_s$, and the code for $\delta$ is given by $P_s(-1)_oC(-2)_o(-2)_uC(-1)_oP_s$. Note that $P_s$ indicates that the arc starts or ends at the puncture $p$, the subscript $o/u$ corresponds to whether the arc passes over of under that puncture, and the $C$ in the code for $\delta$ denotes the fact that $\delta$ goes to the back of the surface $S$. 
\end{ex}

\begin{figure}[h]
\begin{center}
\begin{overpic}[width=3in]{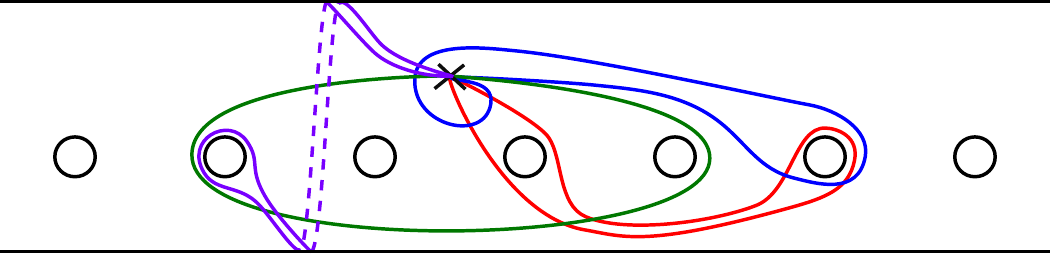}
\put(53.5,10){\textcolor{red}{\small$\alpha$}}
\put(69,17.5){\textcolor{blue}{\small$\beta$}}
\put(39,4){\textcolor{ForestGreen}{\small$\gamma$}}
\put(25,9){\textcolor{violet}{\small$\delta$}}
\put(6, -3){\tiny{-3}}
\put(21, -3){\tiny{-2}}
\put(35, -3){\tiny{-1}}
\put(50, -3){\tiny{0}}
\put(65, -3){\tiny{1}}
\put(79, -3){\tiny{2}}
\put(92, -3){\tiny{3}}
\end{overpic}
\caption{Arcs on the front of the surface $S$, whose codes are given in Example \ref{ex:codes}.  The $\times$ denotes the puncture $p$, and the elements $k\in\Z$ shown under $S$ denote the subscript on the simple closed curves $B_k$. }
\label{fig:examplecodes}
\end{center}
\end{figure}

The appearance of repeated characters in the code of an arc indicates backtracking in the arc so that we have the following. 

\begin{defn}\label{def:reducedcode}
Let $\gamma$ be an oriented arc on $S$ starting and ending at $p$.  A code for $\gamma$ is \textit{reduced} if no two adjacent characters are the same and if the character immediately following the initial $P_s$ or preceding the terminal $P_s$ is not $P_{o/u}$.
\end{defn}

\noindent Note that if a triple appears in the code for an arc, it is reduced to a single character according to our convention, as only \emph{pairs} of repeated characters are removed.

\begin{defn}\label{def:codelength}
The \textit{code length} of an arc $\gamma$, denoted $\ell_c(\gamma)$, is the number of characters in a reduced code for $\gamma$. 
\end{defn}

Given a string of characters $\alpha=a_1a_2\ldots a_n$, we denote by $\overline{\alpha}$ the reverse of $\alpha$, so that $\overline{\alpha}=a_na_{n-1}\ldots a_2a_1$.  If $\alpha$ is an arc, then  $\overline{\alpha}$ is the same arc with the opposite orientation.

\subsection{Spokes}
We now introduce spokes, which are special segments on $S$. 
\begin{defn} \label{def:backloop}

A \textit{segment} is a simple path with at least one endpoint which is not a puncture, and no endpoints on a puncture other than $p$.  We code a segment in an analogous way as we do arcs. 
\end{defn}

Given any essential, separating simple closed curve $\chi$ on the front of $S$ such that one connected component of $S \setminus \chi$ is a finite-type surface containing the puncture $p$, we let $g_{\chi,j}=hT_\chi^j$. We call the connected component of $S \setminus \chi$ containing $p$ the \emph{interior} or \emph{inside of $\chi$} and the other connected component is the \emph{outside of $\chi$}. Below, we prove some of the technical results in the case where $j = 1$ for brevity, which is actually the most difficult case, and refer to $g_{\chi,j}$ as $g_\chi$ or $g$ for notational simplicity. All proofs generalize easily to $j > 1$, and in fact simplify a bit. 

\begin{defn}\label{def:standard}
A simple closed curve $\chi$ on the front of $S$ is \emph{standard} if it has the form in Figure \ref{fig:sectors}; we assume that $\chi$ contains the puncture $p$ in its interior and that  the right-most puncture contained in $\chi$ is $p_0$. In addition, there are no punctures ``above" $\chi$.  The punctures on the interior of $\chi$ are called \textit{interior punctures}.
\end{defn}

\noindent We first consider curves that can be translated to standard curves by powers of $h$. 

\begin{lem}\label{lem:standardscc}
Let $\chi$ be a simple closed curve on $S$ such that $\chi'$ is homotopic to $ h^i(\chi)$ which is standard for some $i$.  Then $g_\chi$ and $g_{\chi'}$ are conjugate by a power of $h$, and thus, $g_\chi$ is loxodromic with respect to the action of $\Map(S,p)$ on $\mc A(S,p)$ if and only if $g_{\chi'}$ is. 
\end{lem}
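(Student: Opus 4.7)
The plan is to reduce the question about $g_{\chi'}$ to one about $g_\chi$ by conjugating with a suitable power of the shift map $h$. The key observation is that the Dehn twist is natural under homeomorphisms, so if $\chi'$ is isotopic to $h^i(\chi)$, then the Dehn twists agree up to isotopy, giving $T_{\chi'} = T_{h^i(\chi)} = h^i T_\chi h^{-i}$ in $\Map(S)$.

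First I would plug this identity into the definition $g_{\chi',j} = h T_{\chi'}^j$ and simplify. Expanding gives
\[
g_{\chi',j} \;=\; h\bigl(h^i T_\chi h^{-i}\bigr)^{j} \;=\; h \cdot h^i T_\chi^j h^{-i} \;=\; h^{i+1} T_\chi^{j} h^{-i},
\]
where in the middle equality I used that $h$ commutes with $h^i$. On the other hand,
\[
h^i g_{\chi,j} h^{-i} \;=\; h^i \bigl(h T_\chi^j\bigr) h^{-i} \;=\; h^{i+1} T_\chi^j h^{-i},
\]
so $g_{\chi',j} = h^i g_{\chi,j} h^{-i}$. This exhibits the desired conjugation by a power of $h$.

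Next I would verify that the conjugating element lies in $\Map(S,p)$. The shift map $h$ has domain contained in $\{p_i\}_{i\in\Z}$ and therefore fixes the distinguished puncture $p$, so $h^i\in\Map(S,p)$ for every $i\in\Z$. Since $\Map(S,p)$ acts on $\mc A(S,p)$ by isometries and conjugation in an isometry group preserves the classification of isometries (elliptic, parabolic, loxodromic), $g_\chi$ is loxodromic on $\mc A(S,p)$ if and only if $g_{\chi'}$ is.

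I do not expect a real obstacle here: the statement is essentially the naturality of Dehn twists combined with the fact that powers of $h$ commute. The only point that warrants some care is confirming that $h^i$ fixes $p$ (so that it is a legitimate element of $\Map(S,p)$) and that the identity $T_{h^i(\chi)} = h^i T_\chi h^{-i}$ is used correctly at the level of mapping classes rather than just homeomorphisms; both are immediate from the setup in Section~2.
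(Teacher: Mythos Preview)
Your proof is correct and follows essentially the same approach as the paper: both compute $h^i g_\chi h^{-i} = h^{i+1}T_\chi^j h^{-i} = hT_{h^i(\chi)}^j = g_{\chi'}$ using the naturality of Dehn twists and the fact that powers of $h$ commute. You add the (helpful) observation that $h^i\in\Map(S,p)$, which the paper leaves implicit.
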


\begin{proof}
Fix any simple closed curve $\chi$ on the front of $S$ containing the puncture $p$. 
If $h^i(\chi) = \chi'$ is standard, we see that \[h^{i}(g_\chi)h^{-i}=h^{i}(hT_{\chi})h^{-i}=h(h^{i}T_{\chi}h^{-i})=hT_{h^{i}(\chi)}=hT_{\chi'}=g_{\chi'},\] which concludes the proof.  
\end{proof}

We therefore assume in the remainder of the paper that $\chi$ is standard. Moreover, we choose the homotopy representative of $\chi$ to contain no backtracking. It will be useful to have a code for $\chi$, even though it is not a arc.  We define this code in the usual way, by tracking whether $\chi$ passes over or under each puncture, always assuming that $\chi$ is oriented clockwise.  However, since $\chi$ does not have a well-defined starting point, such a code is only well-defined up to cyclic permutations.  This will cause no problems in this paper.  For example, a code for the curve $\chi$ shown in Figure~\ref{fig:EasyCase} is $P_o0_o0_uP_u{-1}_u{-2}_u{-2}_o{-1}_oP_o$.

\begin{defn}
For each $k$ such that $p_k$ is contained inside $\chi$, and for $p_1$, a \emph{spoke} (to $p_k$) is a segment whose initial point is $p$ and whose terminal point is the north pole of $B_k$, as in Figure \ref{fig:sectors}.  In particular, such a segment passes over all punctures contained in $\chi$ between $p$ and $p_k$.  We label the spokes $\sigma_i$ consecutively, starting from the right; if there are $\ell$ punctures on the interior of $\chi$, then the spokes are $\sigma_0,\sigma_1,\dots, \sigma_\ell$.  Note that since $\chi$ is standard, $\sigma_{0}$ is the only spoke whose terminal point is outside of $\chi$.

Given a spoke $\sigma_i$, let $P(i) \in \mathbb{Z}$ be the index so that $\sigma_i$ is a segment from $p$ to $B_{P(i)}$, i.e., $\sigma_i$ ends at the simple closed curve corresponding to $P(i)$. If $P(i)\leq -1$, define a code for $\sigma_i$ to be $P_s(-1)_o(-2)_o\dots P(i)_s$.  Here, $P(i)_s$ indicates that the segment stops at a point on $B_{P(i)}$.  If $P(i)=0$, the code for $\sigma_i$ is $P_s0_s$, while if $P(i)=1$, the code for $\sigma_i$ is $P_s0_o1_s$.
\end{defn}

\begin{defn}The spokes divide the interior of $\chi$ into regions which we call \textit{sectors}.  We denote the sector bounded by $\sigma_i$ and $\sigma_{i+1}$ by $S_{i}$ for $i = 0, \ldots, \ell -1$ and $S_\ell$ is the sector bounded by $\sigma_\ell$ and $\sigma_0$.  See Figure \ref{fig:sectors}.   
\end{defn}

\begin{figure}
    \centering
    \begin{overpic}[width=3in,height=1in]{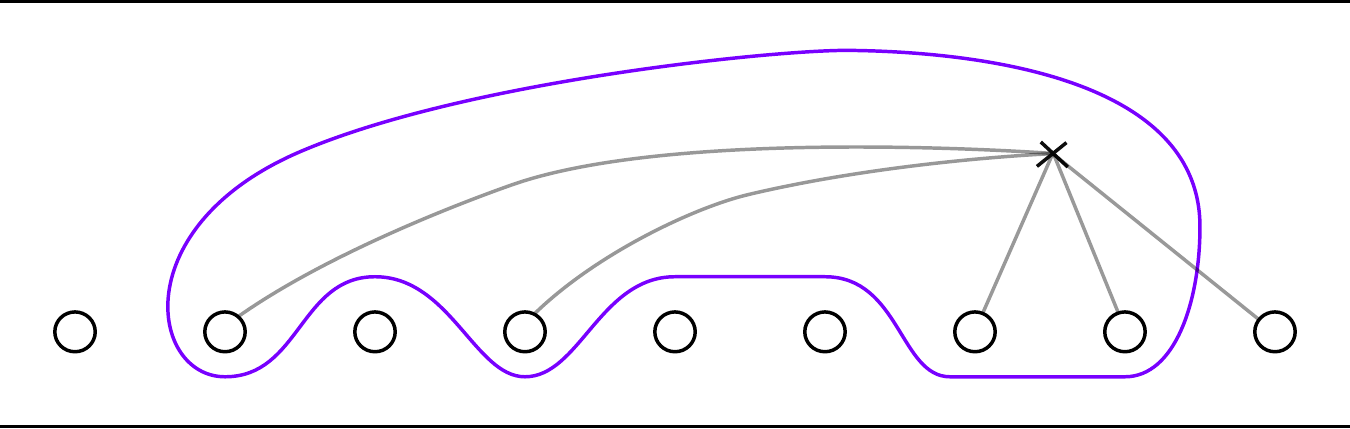}
\put(80,29){\small$\chi$}    
\put(90,12){\small$\sigma_0$}
\put(82.5,10){\small$\sigma_1$}
\put(74,10){\small$\sigma_2$}
\put(52,15){\small$\sigma_3$}
\put(21,15){\small$\sigma_4$}
\end{overpic}
    \caption{The curve $\chi$ is a standard simple closed curve  on the front of $S$ containing the puncture $p$.  The purple segments are spokes, and the sectors are labeled in green.}
    \label{fig:sectors}
\end{figure}

\begin{defn}
If $i\ge 1$,  an arc $\delta$ starting at $p$ \textit{initially follows a spoke $\sigma_i$} if an initial portion of the reduced code for $\delta$ agrees with the code for $\sigma_i$ with the last character replaced with either $P(i)_{u}$ or $P(i)_{o}P(i)_{u}$.
Similarly, we say that an arc $\delta$ \textit{initially follows the spoke $\sigma_0$} if the initial portion of its reduced code begins with $P_s0_o1_u$.
\end{defn}

\begin{ex}\label{ex:FollowingSpokes}
Consider the arcs $\beta_1, \beta_2,\beta_3,\beta_4$ in Figure \ref{fig:FollowingSpokes}.  The arc $\beta_1$ starts in sector $S_1$ and initially follows $\sigma_1$,  the arc $\beta_2$ starts in sector $S_2$ and initially follows $\sigma_{3}$,  the arc $\beta_3$ starts in sector $S_4$ and  initially follows  no spoke,  while $\beta_4$ starts in $S_1$ and initially follows no spoke.
\end{ex}

\begin{figure}
\centering
\begin{overpic}[width=4in,height=1in]{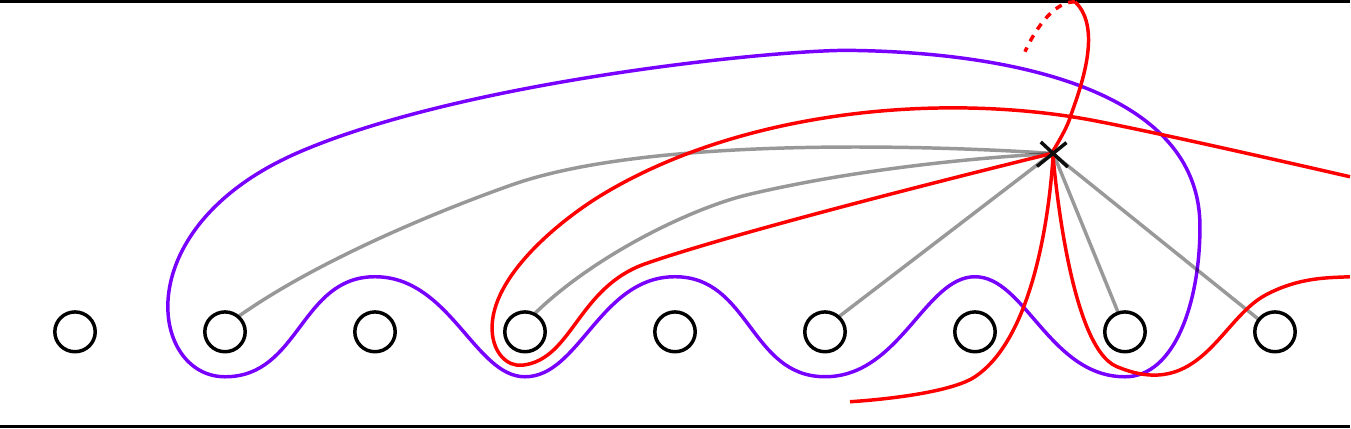}
\put(85,12){\tiny$\sigma_0$}
\put(81.5,10){\tiny$\sigma_1$}
\put(70,10){\tiny$\sigma_2$}
\put(41,10){\tiny$\sigma_3$}
\put(21,10){\tiny$\sigma_4$}
\put(95,16.5){\textcolor{red}{\small$\beta_2$}}
\put(95,9.5){\textcolor{red}{\small$\beta_1$}}
\put(95,16.5){\textcolor{red}{\small$\beta_2$}}
\put(81,21.5){\textcolor{red}{\small$\beta_3$}}
\put(74,2){\textcolor{red}{\small$\beta_4$}}
\end{overpic}
\caption{The arcs in Example~\ref{ex:FollowingSpokes}.}
\label{fig:FollowingSpokes}
\end{figure}

\subsection{Standard position}
Every arc $\gamma$ with reduced code is homotopy equivalent to an arc $\gamma'$ with the same code that satisfies the following properties.
\begin{itemize}
    \item There is an initial segment $\gamma_1'$ of $\gamma'$ which is contained in a unique sector.
    \item If $\gamma_1'$ is contained in $S_i$ with $i\geq 1$, then $\gamma_1'$ begins with $P_s(-1)_o\dots P(i)_o$.  If $i=0$, then the arc begins with $P_s0_o$.
    \item If $\gamma_1'$ is followed by a segment that agrees with the code for $\chi$, then  $\gamma'$ does not intersect $\chi$ until after this segment.
    \item If the character after $\gamma_1'$ does not agree with the code for $\chi$,  then the arc crosses $\chi$ immediately after traversing $\gamma'_1$.
\end{itemize}

If $\delta$ is an arc that begins in sector $S_i$ and does not follow $\sigma_{i+1}$ or $\sigma_i$, then we say $\delta$ \textit{exits immediately}.  Such a $\delta$ can exit immediately in two ways.  First, it could go to the back directly after crossing $\chi$. Otherwise, there must be an exterior puncture $q$ between $P(i+1)$ and $P(i)$, and $\delta$ must contain $q_u$.
For instance, in Figure \ref{fig:FollowingSpokes} the former way occurs for $\beta_3$ and the latter way occurs for $\beta_4$.

%%%%%%%%%%%%%%%

\section{Blocking cancellation}

 Suppose $\delta$ begins in $S_i$ and initially follows $\sigma_{i+1}$.  If $i\ge 1$, then by definition a code for $\delta$ begins with
$P_s(-1)_o\dots (P(i+1)+1)_oP(i+1)_u.$ 
If $i=0$, then  this code begins with $P_s0_o0_u$.
In particular, because the final character of this code in either case agrees with that of $\chi$, the arc $\delta$ must follow $\chi$ clockwise for at least one character.
We say $\delta$ \textit{hooks a puncture} if 
either: (a) $\delta$ follows $\chi$ clockwise for less than one full turn and then has a character $k_{o/u}$ that disagrees with the code for $\chi$ such that $p_k$ is an interior puncture; or (b) if $\delta$ follows $\sigma_{i+1}$, loops around $P(i+1)$, and then follows $\sigma_{i+1}$ backwards to $p$. In (a), we say $\delta$ hooks the puncture $p_k$, while in (b) we say $\delta$ hooks the puncture $P(i+1)$. 
See Figure \ref{fig:HookAPuncture}.  Notice that in (a), $k_{o/u}$ is the first character after $\delta$ leaves $S_i$ that disagrees with the code for $\chi$, so we can think of this as the point where $\delta$ stops fellow traveling $\chi$.

\begin{figure}
    \centering
    \begin{overpic}[width=3in]{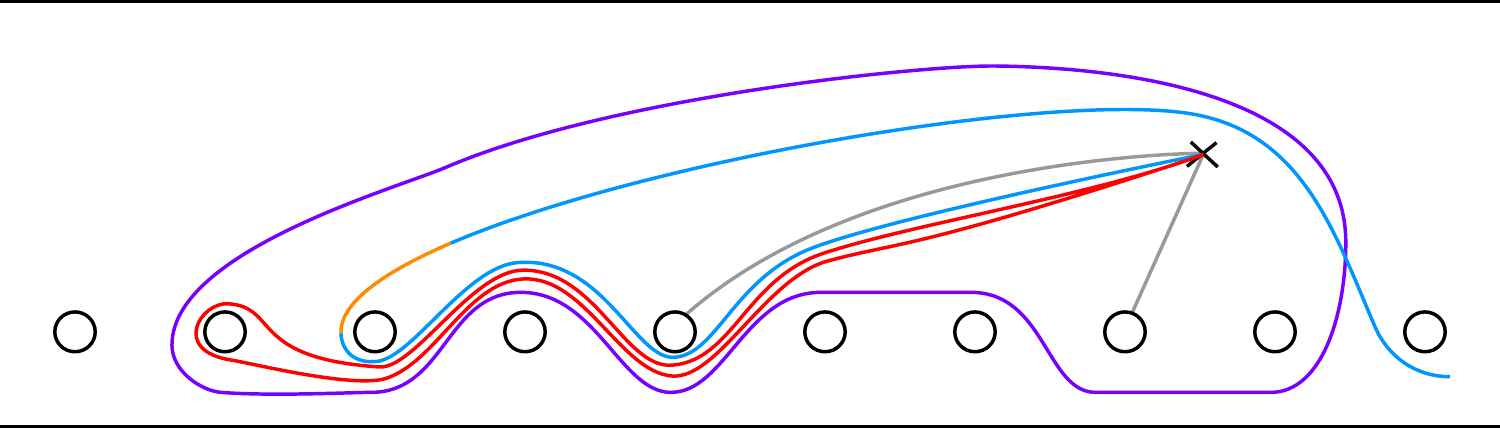}
    \put(35,15){\textcolor{cyan}{\small$\delta$}}
    \put(15,9){\textcolor{ForestGreen}{\tiny$T_\chi(\delta)$}}
    \end{overpic}
    \vspace{.15in}
    
    \begin{overpic}[width=3in]{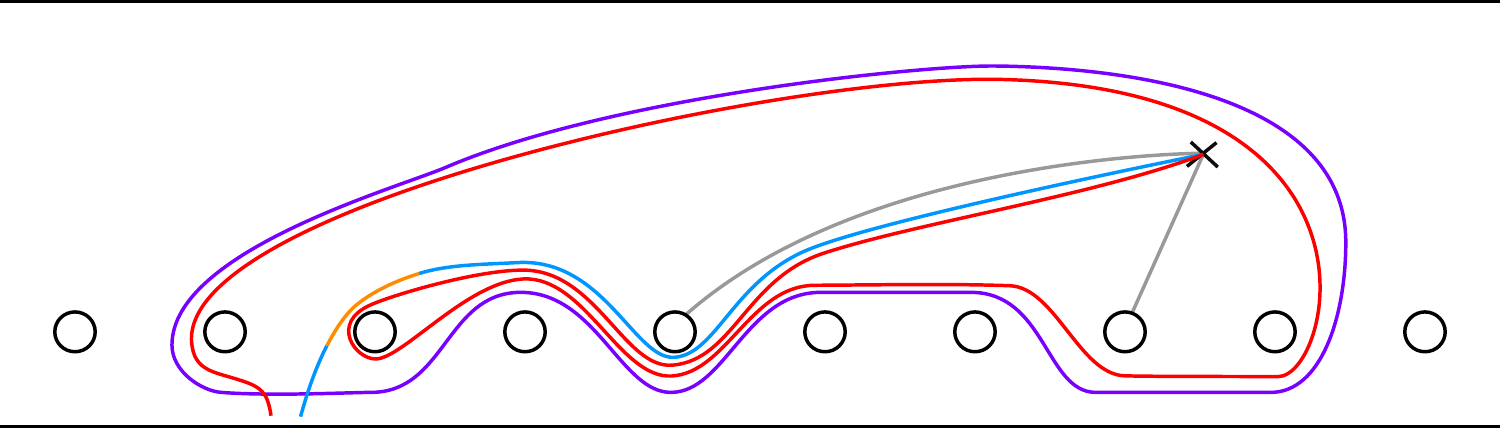}
    \put(40,9.5){\textcolor{cyan}{\small$\delta$}}
    \put(29,14.5){\textcolor{red}{\tiny$T_\chi(\delta)$}   }
    \end{overpic}
    \vspace{.15in}    

    \begin{overpic}[width=3in]{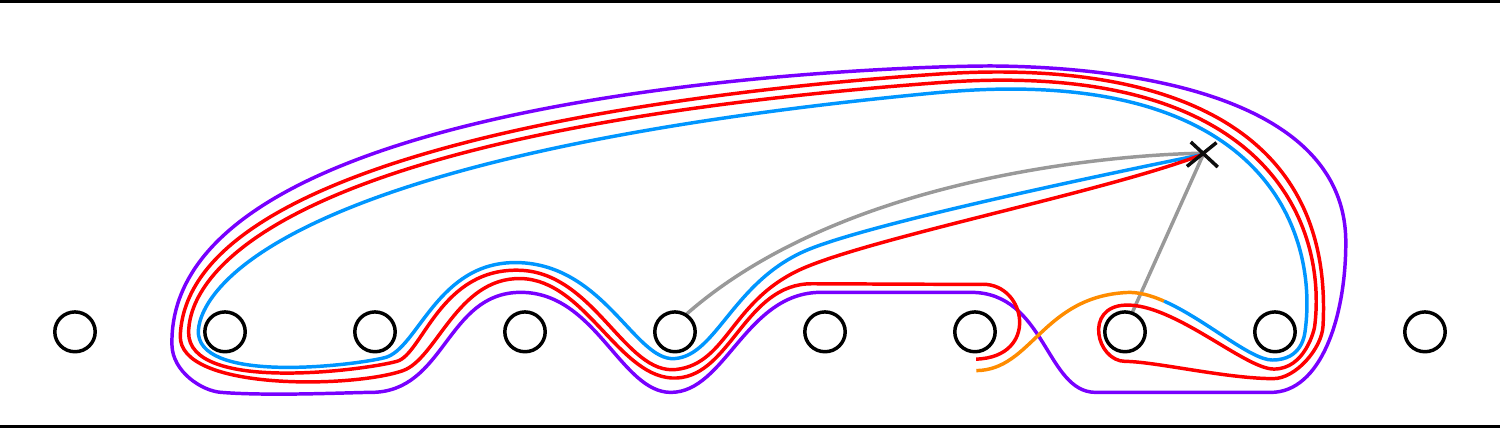}
    \put(40,9.5){\textcolor{cyan}{\small$\delta$}}
    \put(69,13){\textcolor{red}{\tiny$T_\chi(\delta)$} }
    \end{overpic}
    \vspace{.15in}
    
    \begin{overpic}[width=3in]{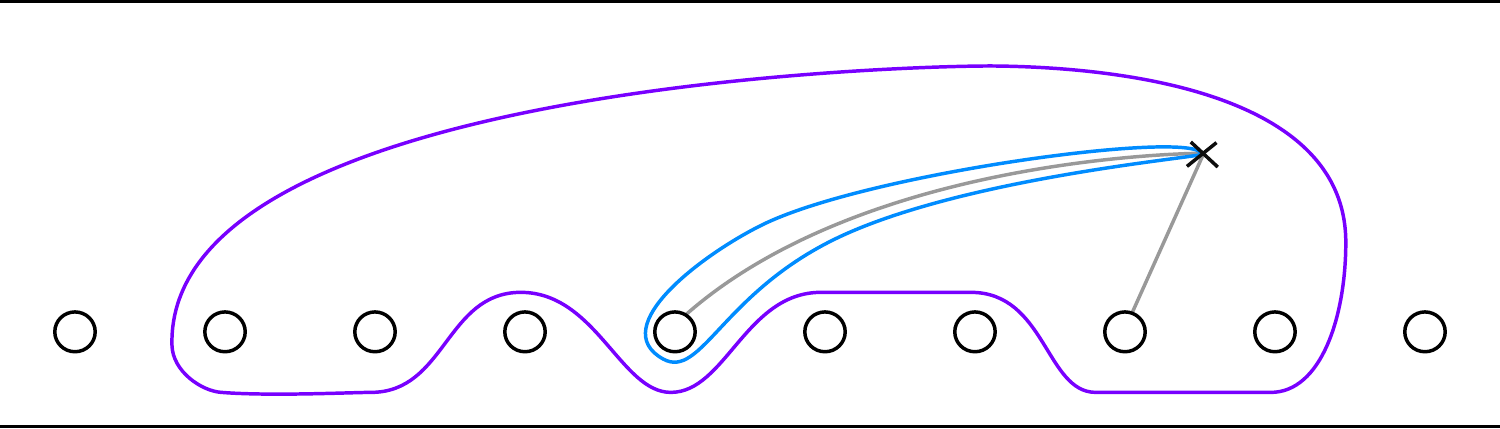}
    \put(55,20){\small$\textcolor{cyan}{\delta=T_\chi(\delta)}$}
    \end{overpic}
     \caption{Four ways an arc $\delta$ that initially follows $\sigma_{i+1}$ can hook a puncture.  The first three pictures fit (a), while the last fits (b).  The orange highlighted segments in each of the first three correspond to the character $k_{o/u}$ in the definition.  In the last panel, $\delta$ is fixed by $T_\chi$.  This illustrates part (1) of Lemma~\ref{lem:blocker}.}
    \label{fig:HookAPuncture}
\end{figure}

Our strategy will be to understand how the images of arcs begin after iteratively applying $T_\chi$ and $h$.  In order to do this, we need to understand what kind of cancellation can occur when we apply $T_\chi$ or $h$ to an arc. 
The following lemma gives specific conditions under which there is no cancellation between a particular initial segment of $\delta$ and the remainder of $\delta$.  In the following sections, we will focus only on initial segments of arcs that fit into one of these categories.
\begin{lem}\label{lem:blocker}
If $\delta$ is an arc, then the following hold:
	\begin{enumerate}
	\item If $\delta$ hooks a puncture $p_k$, then there is no cancellation between the images under $T_\chi$ of the portion of $\delta$ before $p_k$ and the portion after $p_k$.
	\item Suppose $\delta$ begins in a sector $S_i$ and then $\delta$ either 
	    \begin{itemize}
	        \item  exits immediately, so that the character in a reduced code for $\delta$ immediately after crossing $\chi$ is either $C$ or $q_u$ for some exterior puncture; or 
	        \item follows $\chi$ counterclockwise for at least one character $k_{o/u}$.
	    \end{itemize} 
	    Then there is no cancellation between the images under $T_\chi$ of the portion of $\delta$ before this character and the portion after this character. 
	\item If $\delta$ contains the character $k_{o/u}$ for any $k\in [1,\infty)$, then there is no cancellation between the images under $g$ of the portion of $\delta$ before $k_{o/u}$ and the portion after $k_{o/u}$.
	\item If $\delta$ contains the character $k_{o/u}$ for any $k\in [2,\infty)$, then there is no cancellation between the images under $g^{-1}$ of the portion of $\delta$ before $k_{o/u}$ and the portion after $k_{o/u}$.
	\end{enumerate}
\end{lem}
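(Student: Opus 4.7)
The plan is, for each part, to identify specific characters in the reduced code of the image arc immediately on either side of the purported cancellation point, and show that these characters differ from one another and that no illegal $P_s P_{o/u}$ adjacency appears at the join, thereby ruling out any reduction across the split. Throughout, I will use the explicit codes for arcs following spokes and the code for $\chi$ itself to read off what $T_\chi$ does character-by-character, and the fact that $h$ acts on codes by incrementing each puncture subscript by $+1$.

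Part (2) is the cleanest. In both sub-cases the split character marks the point at which $\delta$ first leaves the interior of $\chi$: either via $C$ or $q_u$ for exterior $q$ in the exit-immediately case, or via a character $k_{o/u}$ disagreeing with the clockwise code of $\chi$ in the counterclockwise case. On the before-side, the image under $T_\chi$ of the initial spoke-like portion still ends with the spoke's final interior character; on the after-side, the image either retains the exit character $C$ or $q_u$ (both invisible to $T_\chi$ since $\chi$ fixes the exterior pointwise up to isotopy), or picks up an entire clockwise traversal of $\chi$'s code inserted by $T_\chi$ before continuing counterclockwise. Comparing the characters at the join rules out cancellation in each scenario.

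Part (1) splits into sub-cases (a) and (b) from the definition of hooking. Case (b) is essentially automatic: the hooking loop around $P(i+1)$ is freely homotopic to a boundary component, so $T_\chi(\delta)=\delta$ up to isotopy, and the given reduced code for $\delta$ has no cancellation at the hook by hypothesis. Case (a) is the substantive one. Here $\delta$ follows $\chi$ clockwise for less than a full turn before deviating at $k_{o/u}$ for an interior puncture $p_k$. I would verify by direct code computation that $T_\chi$, which adds a full clockwise wrap at each intersection with $\chi$, leaves the character $k_{o/u}$ intact in the reduced image code, flanked on one side by the character inherited from the inserted wrap and on the other side by the post-hook portion of $\delta$; the fact that $p_k$ lies strictly inside $\chi$ forces these neighboring characters to be distinct from $k_{o/u}$ and from each other.

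Parts (3) and (4) reduce to a support argument. The twist $T_\chi$ only alters code characters associated to interior punctures or to $\chi$-crossings, all of which correspond to puncture indices $\leq 0$, and leaves any character $k_{o/u}$ with $k\geq 1$ untouched; the subsequent shift $h$ sends $k_{o/u}$ to $(k+1)_{o/u}$, still safely outside the twist's range, so the character persists as a barrier in the reduced code of $g(\delta)$ and its neighbors are likewise transported by $h$ to distinct symbols. For $g^{-1}=T_\chi^{-1}h^{-1}$, applying $h^{-1}$ first sends $k_{o/u}$ to $(k-1)_{o/u}$, so we need the stricter hypothesis $k\geq 2$ to keep the blocker outside the support of $T_\chi^{-1}$. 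The main obstacle I anticipate is case (a) of part (1), since one must track precisely which characters from the codes of $\chi$ and $\delta$ survive in reduced form after the twist near the interior puncture $p_k$; the remaining parts amount to bookkeeping of the shift and twist acting on characters that lie outside their respective supports.
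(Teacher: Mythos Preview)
Your proposal is correct and takes essentially the same approach as the paper, which explicitly gives only an intuitive sketch (plus illustrative figures) rather than a formal proof. Both arguments hinge on the observation that cancellation in $T_\chi(\delta)$ can only occur where an inserted copy of the code for $\chi$ meets a subsegment of $\overline{\chi}$ already present in $\delta$, and that in each case the blocking character either disagrees with the code for $\chi$ (parts (1) and (2), including your case-(b) observation that $T_\chi(\delta)=\delta$, which matches the paper's figure caption) or lies outside its support (parts (3) and (4), where your reasoning about interior indices $\leq 0$ versus $k\geq 1$ is exactly the paper's).
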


The proof of this lemma is intuitively straightforward: in each case, the character that blocks cancellation disagrees with the code for $\chi$.   
When applying $T_\chi$ to an arc or segment, a copy of the code for $\chi$ is inserted into the code for $\delta$, representing the twisting around $\chi$. Thus, the only cancellation that can occur in $T_\chi(\delta)$ is when the copy of $\chi$ is inserted into the code for $\delta$ next to a subsegment of $\overline{\chi}$. In (1) and (2), it is clear that the initial segment does not end with a subsegment of $\overline{\chi}$. 
For (3), the additional intuition is that $T_\chi$ fixes the character $k_{o/u}$ because it is not contained in a code for $\chi$, and then $h$ is simply a shift, which does not cause any additional cancellation. For (4), the reasoning is reversed: $h^{-1}$ does not cause any cancellation, and then $T_\chi^{-1}$ fixes the character $h^{-1}(k_{o/u})=(k-1)_{o/u}$, since $k\geq 2$.

Rather than giving a lengthy and technical proof of these straightforward facts, we illustrate the proof of each case; see Figure~\ref{fig:HookAPuncture} for case (1) and Figure~\ref{fig:BlockingCancellation} for cases (2) and (3).

\begin{figure}
    \centering
    \begin{overpic}[width=3in]{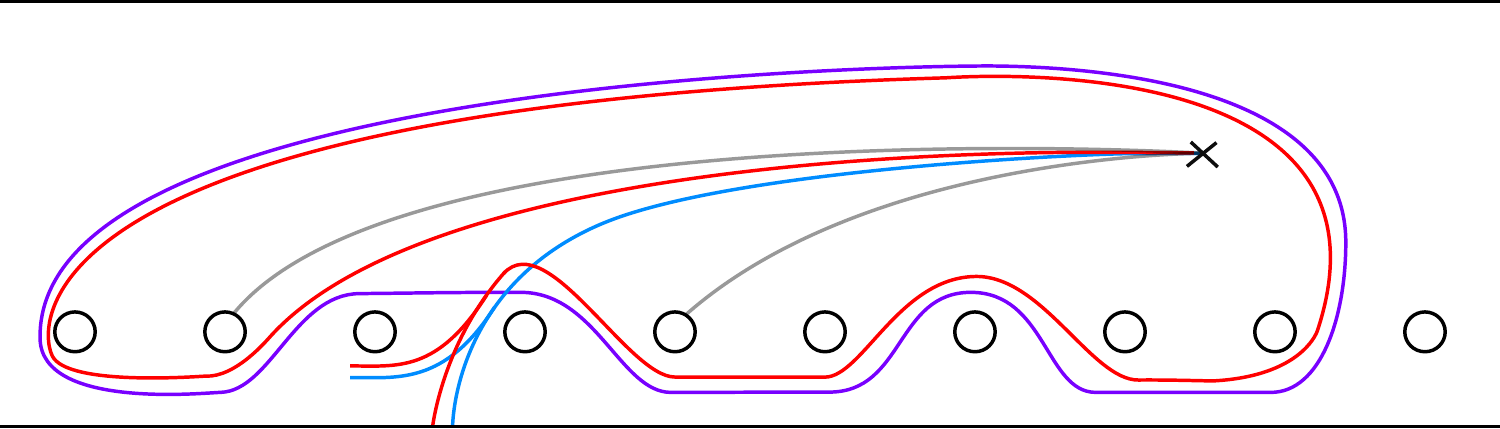}
    \put(41,10.5){\textcolor{cyan}{\small$\delta$}}
    \put(78,10){\textcolor{red}{\small$T_\chi(\delta)$}}
    \end{overpic}
\vspace{.15in}
   \begin{overpic}[width=3in]{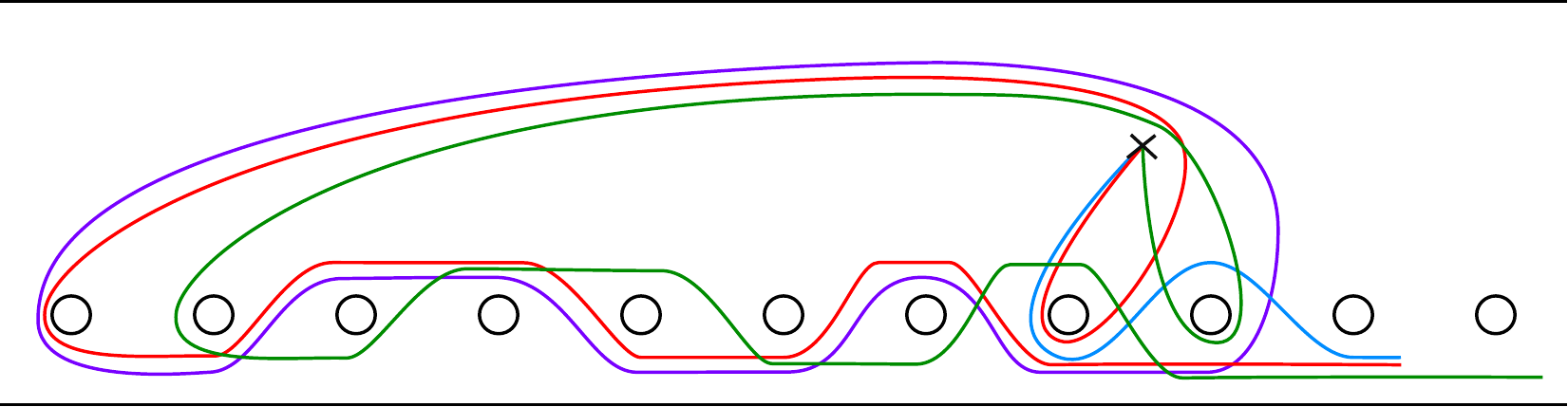}
   \put(67,13){\textcolor{cyan}{\small$\delta$}  }
   \put(29,14){\textcolor{ForestGreen}{\small$g(\delta)$}}
   \put(54,11){\textcolor{red}{\small$T_\chi(\delta)$}   }
    \end{overpic}
    \caption{Illustration of parts (2) and (3) of Lemma~\ref{lem:blocker}.  Note that in Case (3), the only cancellation occurs in the green highlighted portion.}
    \label{fig:BlockingCancellation}
\end{figure}

\section{A ``starts like" function}

In the remainder of the paper, we will construct an explicit quasi-geodesic axis in $\mathcal{A}(S,p)$ on which $g$ acts by translations and therefore show that $g$ acts loxodromically on $\mathcal{A}(S,p)$.
To accomplish this, we will follow a strategy similar to that in \cite{Bavard, AMP}, in which  a ``starts like" function is constructed and used  to bound distance in $\mathcal{A}(S,p)$ from below.

We begin by defining the vertices in the quasi-axis for $g$ via the following formula
\begin{equation}\label{eqn:defofalpha}
\alpha_k=\begin{cases}
P_s0_o1_u2_o3_o3_u2_o1_u0_oP_s,&k=0\\
g^k(\alpha_0),&k\neq 0
\end{cases}
\end{equation}
where $k\in\Z$.

\subsection{Defining a ``starts like" function}
The following two definitions were first introduced in \cite{AMP} and inspired by \cite{Bavard}; here they are modified to fit our current situation.

\begin{defn}\label{defn:beginning}
Given any arc $\gamma$ on $S$, we define the \emph{beginning of $\gamma$}, denoted by $\mathring\gamma$, to be the first $\lfloor\frac{1}{2} \ell_c(\gamma)\rfloor -2$ characters in a reduced code for $\gamma$. Recall that code length was defined in Definition \ref{def:codelength}.
\end{defn} 
\begin{defn}\label{def:startslike}
Fix any $k\in\Z_{\ge 0}$. An arc $\delta$ \textit{starts like} $\alpha_k$ if the maximal initial or the maximal terminal segment of $\delta$ which agrees with an initial or terminal segment of $\alpha_k$ (not necessarily respectively) has code length at least $\lfloor\frac12\ell_c(\alpha_k)\rfloor-2$. 
\end{defn}

The following lemma is a straightforward application of Definition \ref{defn:beginning} and the definition of $\alpha_0$ in Equation \eqref{eqn:defofalpha}.

\begin{lem}
If $\delta$ is disjoint from $\mathring\alpha_0$, then $\delta$ does not contain an instance of $\chi$ or $\overline{\chi}$ in its reduced code.
\end{lem}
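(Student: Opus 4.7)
The plan is to argue the contrapositive: if the reduced code of $\delta$ contains a cyclic rotation of the code of $\chi$ (or of $\overline{\chi}$) as a contiguous substring, then $\delta$ must intersect $\mathring\alpha_0$. The case of $\overline{\chi}$ is identical to that of $\chi$ since it represents the same underlying simple closed curve with reversed orientation, so I would treat only the case of $\chi$.

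First, I would compute directly from \eqref{eqn:defofalpha} that $\ell_c(\alpha_0)=10$, so $\mathring\alpha_0$ consists of the first $\lfloor 10/2\rfloor -2 = 3$ characters, namely $P_s\,0_o\,1_u$. Realized as a segment in $S$, $\mathring\alpha_0$ starts at $p$ in the interior of $\chi$, passes over $p_0$ (the rightmost interior puncture of $\chi$), exits $\chi$ via a single transverse crossing just to the right of $p_0$, and terminates having passed under $p_1$, which lies outside $\chi$. In particular, $\mathring\alpha_0$ crosses $\chi$ at exactly one point.

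Next, let $\sigma\subset\delta$ be the subsegment whose reduced code is the assumed cyclic rotation of $\chi$'s code. Because this code is a \emph{complete} cycle of $\chi$, the segment $\sigma$ is homotopic rel endpoints to a path that fellow-travels $\chi$ once around, with its two endpoints lying close together near a single ``cut point'' on $\chi$ determined by the cyclic rotation. After an isotopy of $\delta$ (and/or the choice of a different cyclic rotation) to place this cut point away from $\mathring\alpha_0\cap\chi$, I would join the endpoints of $\sigma$ by a short arc $\tau$ in a small neighborhood of the cut point, chosen disjoint from $\mathring\alpha_0$. The resulting closed loop $L=\sigma\cup\tau$ is then freely homotopic to $\chi$ in $S$.

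Because $L$ is freely homotopic to $\chi$, it separates $S$ into two components, one of which contains all the interior punctures of $\chi$ (including $p$) and the other of which contains the exterior of $\chi$. Since $\mathring\alpha_0$ has one endpoint at $p$ (inside $L$) and its other endpoint outside $\chi$ (hence outside $L$), it must cross $L$ an odd, in particular positive, number of times. As $\tau$ was constructed disjoint from $\mathring\alpha_0$, every such crossing lies on $\sigma\subset\delta$, contradicting the disjointness of $\delta$ and $\mathring\alpha_0$. The main subtlety to justify is that $\tau$ really can be chosen disjoint from $\mathring\alpha_0$; this follows from the fact that the endpoints of $\sigma$ approach a common point on $\chi$ (because $\sigma$ codes a full cycle of $\chi$) and that the cut point can be positioned anywhere along $\chi$, in particular far from the lone intersection $\mathring\alpha_0\cap\chi$.
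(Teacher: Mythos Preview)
The paper does not actually prove this lemma; it simply asserts that it follows directly from Definition~4.1 and the definition of $\alpha_0$. Your argument supplies the natural topological idea---that $\chi$ separates $S$ and $\mathring\alpha_0$ connects the two sides---and is considerably more explicit than anything the paper offers.

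That said, your justification for choosing $\tau$ disjoint from $\mathring\alpha_0$ has a gap. You claim the cut point ``can be positioned anywhere along $\chi$,'' but this is not correct: the cut point is determined by which cyclic rotation of the code of $\chi$ actually appears as a contiguous substring of the reduced code of $\delta$, and that rotation is fixed. An isotopy of $\delta$ cannot change its reduced code and hence cannot move the cut point into a different combinatorial region along $\chi$; nor can you pass to another rotation unless $\delta$ happens to carry extra $\chi$-characters beyond $\sigma$. If the rotation present is the one beginning with $0_u$ and ending with $0_o$, then the endpoints of $\sigma$ lie exactly in the region to the right of $p_0$ through which $\mathring\alpha_0$ passes, and the short arc $\tau$ may be forced to meet $\mathring\alpha_0$ once. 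In that event the single mod-$2$ intersection of $L\simeq\chi$ with $\mathring\alpha_0$ could land on $\tau$ rather than on $\sigma\subset\delta$, and no contradiction results. This residual case can be handled---for instance by analyzing what the characters of $\delta$ adjacent to $\sigma$ force, or by exploiting that the free endpoint of $\mathring\alpha_0$ lies beyond $p_1$, hence outside a thin annular neighborhood of $\chi$ in which $L$ and the homotopy to $\chi$ can be confined---but some such additional argument is needed and you have not provided it.
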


Consider the map \begin{equation}\label{eqn:startslike}
    \phi\colon \mc A(S,p) \to \Z_{\geq 0},
\end{equation} defined by setting $\phi(\delta)$ equal to the largest $i\geq 0$ such that $\delta$ starts like $\alpha_i$.  If no such $i$ exists, then set $\phi(\delta)=0$.
Using this function, we have the following simple lemma, where we do not require that $j$ is non-negative.

\begin{lem}\label{lem:shiftstartslike}
Suppose an arc $\delta$ starts like $\alpha_i$ for some $i\geq 1$.  Then for any $j$ such that $i+j\geq 0$, the arc $g^j(\delta)$ starts like $\alpha_{i+j}$.
\end{lem}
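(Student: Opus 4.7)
My plan is to verify directly that the initial or terminal matching witnessing ``$\delta$ starts like $\alpha_i$'' is transported along $g^j$ to a matching between $g^j(\delta)$ and $\alpha_{i+j}$, and then to control the code length of the resulting matched subsegment.

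Unpacking Definition~\ref{def:startslike}, there is a maximal initial or terminal subsegment $\sigma$ of $\delta$ isotopic to an initial or terminal subsegment $\sigma'$ of $\alpha_i$, with common code length $\ell_c(\sigma') \geq \lfloor \ell_c(\alpha_i)/2\rfloor - 2$. Since $g^j$ is a homeomorphism of $S$ fixing $p$, the image $g^j(\sigma)$ is an initial or terminal subsegment of $g^j(\delta)$ (of the same type as $\sigma$) isotopic to $g^j(\sigma')$, a corresponding subsegment of $g^j(\alpha_i) = \alpha_{i+j}$. This automatically produces a candidate matching between $g^j(\delta)$ and $\alpha_{i+j}$, of code length $\ell_c(g^j(\sigma'))$.

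It then remains to show $\ell_c(g^j(\sigma')) \geq \lfloor \ell_c(\alpha_{i+j})/2\rfloor - 2$. I would proceed by induction on $|j|$, reducing to the base case $|j|=1$; the assumption $i \geq 1$ keeps intermediate indices $\geq 1$ in the forward direction (so that Lemma~\ref{lem:blocker}(3) applies to block cancellation when applying $g$), while $i+j \geq 0$ keeps them non-negative in the backward direction (so that Lemma~\ref{lem:blocker}(4) applies to block cancellation when applying $g^{-1}$). For the base case, write $\alpha_i = \sigma' \cdot \tau'$ where $\tau'$ is complementary, so $\ell_c(\tau') \leq \lceil \ell_c(\alpha_i)/2\rceil + 2$, and consider $\alpha_{i \pm 1} = g^{\pm 1}(\sigma') \cdot g^{\pm 1}(\tau')$ as a concatenation of arcs. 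I would invoke the relevant case of Lemma~\ref{lem:blocker} to bound the cancellation at the join, then combine this with the inequality $\ell_c(\alpha_{i \pm 1}) \leq \ell_c(g^{\pm 1}(\sigma')) + \ell_c(g^{\pm 1}(\tau'))$ to derive the desired lower bound.

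The main obstacle is this code length estimate in the base case. I expect it to rest on the ``out-and-back'' palindromic structure of $\alpha_0$ (and hence of each $\alpha_i$), which keeps $\sigma'$ and $\tau'$ of comparable code lengths, so that their images under $g^{\pm 1}$ remain comparable and $g^{\pm 1}(\sigma')$ comprises at least half of $\alpha_{i \pm 1}$ up to the additive slop of $-2$ built into Definition~\ref{def:startslike}.
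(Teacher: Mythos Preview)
Your outline has the right shape---induction on $|j|$, Lemma~\ref{lem:blocker}(3)/(4) to block cancellation, palindromic structure of $\alpha_i$---but the step you flag as the ``main obstacle'' is in fact the entire content of the proof, and your proposed route through code-length inequalities does not close it.

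There are two intertwined gaps. First, the sentence ``This automatically produces a candidate matching\ldots of code length $\ell_c(g^j(\sigma'))$'' is not automatic. As paths, $g^j(\sigma)$ is an initial subarc of $g^j(\delta)$, but at the level of \emph{reduced codes} this fails unless you block cancellation at the join between $g^j(\sigma)$ and the rest of $g^j(\delta)$ (and likewise for $g^j(\sigma')$ inside $\alpha_{i+j}$). Lemma~\ref{lem:blocker} only applies when you know the specific character sitting at the join; since your $\sigma'$ is the \emph{maximal} matching segment, its terminal character is unspecified, so you cannot invoke the lemma. Second, even granting no cancellation at the join, your inequality $\ell_c(\alpha_{i\pm 1}) \le \ell_c(g^{\pm 1}(\sigma')) + \ell_c(g^{\pm 1}(\tau'))$ does not by itself yield $\ell_c(g^{\pm 1}(\sigma')) \ge \lfloor \ell_c(\alpha_{i\pm 1})/2\rfloor - 2$: you would need $\ell_c(g^{\pm 1}(\tau')) \le \lceil \ell_c(\alpha_{i\pm 1})/2\rceil + 2$, and nothing you have written controls how $g^{\pm 1}$ distributes code length between $\sigma'$ and $\tau'$.

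The paper sidesteps both issues by proving an exact equality rather than an inequality: it shows $g(\mathring{\alpha}_i) = \mathring{\alpha}_{i+1}$ on the nose. The palindromic structure $\alpha_0 = \tau\,3_o3_u\,\overline{\tau}$ forces any cancellation under $g$ to occur symmetrically on the two halves, so the image of the first half of $\alpha_i$ is the first half of $\alpha_{i+1}$. Crucially, this pins down the terminal character of $\mathring{\alpha}_i$ as $(i+1)_u$, which is exactly what is needed to apply Lemma~\ref{lem:blocker}(3) (and (4), since $i\ge 1$ gives $i+1\ge 2$) at the join in $\delta$. Once you have this equality and this specific blocking character, the inductive step is a one-liner. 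Your plan can be repaired by truncating $\sigma'$ down to $\mathring{\alpha}_i$ and proving that exact equality; without it, the argument does not go through.
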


\begin{proof}
 First, notice that the arc $\alpha_0$ is symmetric in the sense that \[\alpha_0=P_s0_o1_u2_o3_o3_u2_o1_u0_oP_s=\tau3_o3_u\overline{\tau}.\]  By Lemma~\ref{lem:blocker}(3), the characters $3_o$ and $3_u$  block cancellation.  In particular, there is no cancellation between the first half of $\alpha_0$ and the second half. Moreover,  any cancellation that occurs in $\tau3_o$, the first half of $\alpha_0$, must also occur in $3_u\overline{\tau}$, the second half of $\alpha_0$. Thus the image of the first half of $\alpha_0$ is the first half of $\alpha_1$.  Since the characters $1_u$ and $2_o$ also block cancellation by Lemma~\ref{lem:blocker}(3) and the last character of $\mathring{\alpha}_0$ is $1_u$, it follows that $g(\mathring{\alpha}_0)=\mathring{\alpha}_1$ and that the last character of $\mathring{\alpha}_1$ is $2_u$.  Inductively, we conclude that $g(\mathring{\alpha}_i)=\mathring\alpha_{i+1}$ and the last character of $\mathring\alpha_{i+1}$ is $(i+1)_u$.

To show the lemma, it suffices to prove the result in the special case that $j=\pm 1$ and $i+j\ge 0$. An inductive argument then exhibits the general case.
To this end, if $j=1$ and $\delta$ starts like $\alpha_i$, then the initial subsegment of $\delta$ is given by $\mathring\alpha_i$.  Moreover, the terminal character $(i+1)_u$ of $\mathring{\alpha}_{i}$, which must appear in $\delta$,  blocks cancellation by Lemma \ref{lem:blocker}(3).
In particular 
  $g(\mathring\alpha_i)=\mathring\alpha_{i+1}$ is the initial subsegment of $g(\delta)$, as required. 
Similarly, if $j=-1$, then since $i+j\ge 0$, we have $i+1\ge 2$, and therefore $(i+1)_u$ blocks cancellation by Lemma \ref{lem:blocker}(4). 
In particular, $g^{-1}(\delta)$ will contain $\mathring\alpha_{i-1}$, as required.
\end{proof}

\subsection{A lower bound on the ``starts like" function}

The goal of this section is to prove the existence of a uniform $M$ so that given any arc $\delta$ disjoint from $\mathring{\alpha}_0$, there exists $0 \leq k \leq M$  such that $g^k(\delta)$ starts like $\alpha_1$.  We begin with an example that motivates the method of proof; see Figure~\ref{fig:EasyCase}.  The arc $\delta$ shown in the figure begins in sector $S_3$ and follows $\sigma_3$.  An initial subpath is invariant under $T_\chi$, so  $g(\delta)$ begins in $S_2$ and follows $\sigma_2$. Applying $g$ two more times yields an arc $g^3(\delta)$ that begins in $S_0$ and follows $\sigma_0$.  The following lemma shows that  applying $g$ one final time ensures that $g^4(\delta)$ begins like $\alpha_1$. 

\begin{figure}
\centering
\begin{overpic}[width=3in,height=1in]{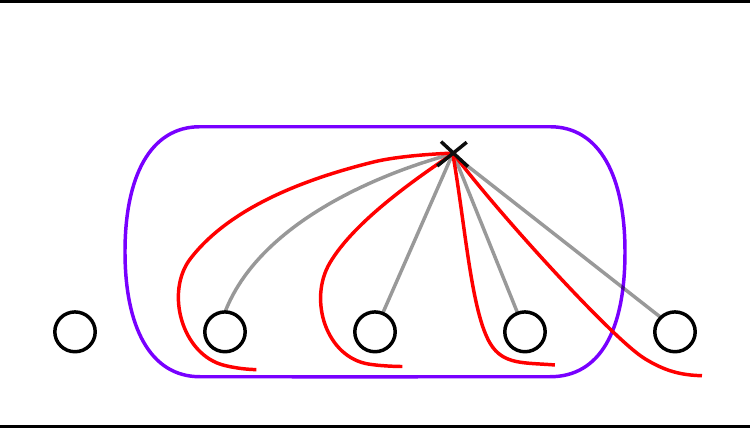}
\put(21,10){\tiny$\delta$}
\put(36,10){\tiny$g(\delta)$}
\put(54,10){\tiny$g^2(\delta)$}
\put(80,2){\tiny$g^3(\delta)$}
\end{overpic}
\caption{The most straightforward situation: after applying $g$ 3 times, we have an arc that begins in $S_0$ and follows $\sigma_0$.}
\label{fig:EasyCase}
\end{figure}

\begin{lem}\label{lem:sigma0toalpha1}
Suppose $\delta$ is an arc which begins in sector $S_0$ and initially follows $\sigma_0$.  Then $g(\delta)$ starts like $\alpha_1$.
\end{lem}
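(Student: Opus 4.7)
The plan is to identify the first three characters of $\delta$'s reduced code with $\mathring\alpha_0$, and then use the blocking argument of Lemma~\ref{lem:blocker}(3) to transport this identification forward under $g$, thereby recovering $\mathring\alpha_1$ as an initial subsegment of $g(\delta)$.

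First I would observe that since $\delta$ begins in $S_0$ and initially follows $\sigma_0$, a reduced code for $\delta$ starts with $P_s 0_o 1_u$. Since $\ell_c(\alpha_0) = 10$, the segment $\mathring\alpha_0$ consists of the first $\lfloor 10/2\rfloor - 2 = 3$ characters of $\alpha_0$, namely $P_s 0_o 1_u$. Thus $\delta$ and $\alpha_0$ share this common initial subsegment of length $3$.

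Next I would apply Lemma~\ref{lem:blocker}(3) to the character $1_u$ (with $k = 1$) appearing in both $\delta$ and $\alpha_0$. This ensures that upon computing $g(\delta) = hT_\chi(\delta)$ and $g(\alpha_0) = \alpha_1$, no cancellation occurs between the image of the initial segment $P_s 0_o 1_u$ and the image of the remainder on either side. Hence the initial subsegments of $g(\delta)$ and of $\alpha_1$ both arise as $g(P_s 0_o 1_u)$ and therefore coincide.

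Finally, I would identify this common initial subsegment as $\mathring\alpha_1$, which is exactly the base case of the identity $g(\mathring\alpha_i) = \mathring\alpha_{i+1}$ carried out in the proof of Lemma~\ref{lem:shiftstartslike} (where the terminal character of $\mathring\alpha_1$ is verified to be $2_u$). It follows that $g(\delta)$ begins with $\mathring\alpha_1$, and so $g(\delta)$ starts like $\alpha_1$ by Definition~\ref{def:startslike}. The main point is that there is no genuine technical obstacle here: this lemma is effectively the $i = 0$, $j = 1$ instance of Lemma~\ref{lem:shiftstartslike}, and its proof reduces to reapplying the blocking-and-shifting computation already used there, with the hypothesis that $\delta$ initially follows $\sigma_0$ supplying precisely the initial segment $\mathring\alpha_0$ required to start the argument.
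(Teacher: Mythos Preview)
Your proof is correct and follows essentially the same approach as the paper: both identify the initial segment $P_s 0_o 1_u = \mathring\alpha_0$ of $\delta$, compute its image under $g$ to be $\mathring\alpha_1$, and invoke Lemma~\ref{lem:blocker} to prevent cancellation with the remainder. The only minor difference is in which character is used to block cancellation: you apply Lemma~\ref{lem:blocker}(3) to the terminal character $1_u$ of the initial segment (exactly as in the proof of Lemma~\ref{lem:shiftstartslike}), whereas the paper instead does a short case analysis on the first character of $\delta'$, noting it must be $1_o$, $2_{o/u}$, or $C$, each of which blocks cancellation. Your version avoids that case split and makes the identification with the $i=0$ instance of Lemma~\ref{lem:shiftstartslike} explicit, which is a slight streamlining.
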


\begin{figure}
    \centering
    \begin{overpic}[width=4in]{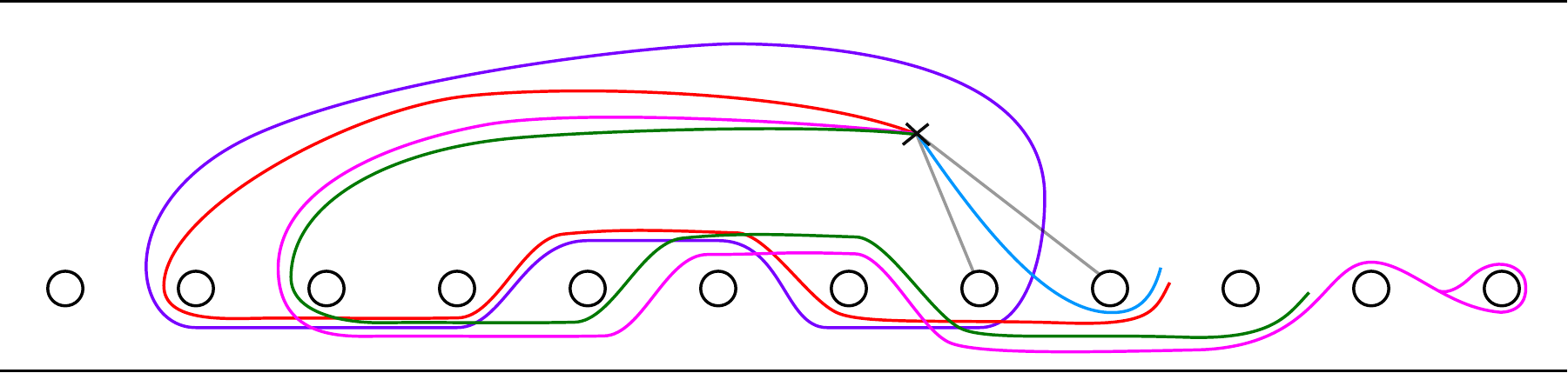}
    \put(60,20){\textcolor{violet}{\small$\chi$}}
    \put(50,17.5){\textcolor{red}{\small$T_\chi(\delta)$}}
    \put(30,12){\textcolor{ForestGreen}{\small$g(\delta)$}}
    \put(90,7){\textcolor{Magenta}{\small$\alpha_1$}}
    \put(73,7){\textcolor{cyan}{\small$\delta$}}
    \end{overpic}
    \caption{An example of an arc $\delta$ as in Lemma~\ref{lem:sigma0toalpha1}.  The arc $g(\delta)$ begins by following $\mathring\alpha_1$, and so $\phi(g(\delta))\geq 1$.}
    \label{fig:sigma0toalpha1}
\end{figure}

\begin{proof}
Since $\delta$ begins in sector $S_0$ and initially follows $\sigma_0$, a reduced code for $\delta$ is $P_s0_o1_u\delta'$ for some $\delta'$.  We may compute the image of $P_s0_o1_u$ directly and see that it agrees with the code for $\mathring{\alpha}_1$; see Figure~\ref{fig:sigma0toalpha1}.  Thus it remains to show that there is no cancellation between $g(P_s0_01_u)$ and $g(\delta')$ in $g(\delta)$.  The initial character of $\delta'$ must be either $1_o$, $2_{o/u}$, or $C$. All of these block cancellation by Lemma~\ref{lem:blocker}. 
\end{proof}

The remainder of this section shows that
if $\delta$ is \textit{any} arc that is disjoint from $\mathring\alpha_0$, applying $g$ sufficiently many times results in an arc which begins in $S_0$ and follows $\sigma_0$ so that applying $g$ one more time results in an arc that starts like $\alpha_1$.  Moreover, we will show that we only need to apply $g$ at most $3\ell+2$ times for this behavior to occur, where $\ell$ is the number of punctures in the interior of $\chi$ that are not $p$.

From our standard position, we will assume for the time being that every arc begins in a sector $S_i$ for some $i\in\{1,\dots, \ell\}$; the case $i=0$ will be handled later. Given 
such an arc $\delta$, there are three possibilities: $\delta$ either exits immediately, follows $\sigma_i$, or follows $\sigma_{i+1}$.  We first show that either $g(\delta)$ or $g^2(\delta)$ must begin in sector $S_{i-1}$, that is, after applying $g$ at most 2 times, the image of $\delta$ has moved into the sector to the right.  The first two cases are dealt with in the following lemma, while the third case is dealt with in Lemma~\ref{lemma:casec}.
\begin{lem}\label{lemma:caseab}
Let $\delta$ be an arc which begins in sector $S_i$ for some $i\in \{1,\dots,\ell\}$.
Suppose that either:
\begin{enumerate}[(a)]
\item $\delta$ exits immediately, or
\item $\delta$ follows $\sigma_i$.
\end{enumerate}
Then $g(\delta)$ begins in sector $S_{i-1}$ and  does not follow $\sigma_i$.
In particular, $g^i(\delta)$ begins in sector $S_0$ and does not follow $\sigma_1$.
\end{lem}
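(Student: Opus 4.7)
The plan is to prove the one-step claim that $g(\delta)$ begins in $S_{i-1}$ and does not follow $\sigma_i$, and then iterate it to obtain the conclusion about $g^i(\delta)$. For the ``begins in $S_{i-1}$'' half, note that $T_\chi$ is supported in an annular neighborhood of $\chi$ disjoint from $p$, so $T_\chi(\delta)$ still begins in $S_i$; then $h$ sends the spoke $\sigma_j$ to $\sigma_{j-1}$ for $j \geq 1$, so $h(S_i) = S_{i-1}$ and hence $g(\delta) = h(T_\chi(\delta))$ begins in $S_{i-1}$.

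For the ``does not follow $\sigma_i$'' half, I would treat the two cases separately. In case (b), a reduced code for $\delta$ begins with $P_s(-1)_o \cdots P(i)_o P(i)_u$; this initial portion does not cross $\chi$ and so is fixed by $T_\chi$ up to homotopy. Applying $h$ shifts it to $P_s(-1)_o \cdots P(i-1)_o P(i-1)_u$, which is the pattern for following $\sigma_{i-1}$. Since $P(i-1) \neq P(i)$, this is distinct from ``follows $\sigma_i$'', so $g(\delta)$ does not follow $\sigma_i$.

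In case (a), a reduced code for $\delta$ begins with $P_s(-1)_o \cdots P(i)_o k$ for $k = C$ or $k = q_u$ with $q$ an exterior puncture. Lemma~\ref{lem:blocker}(2) prevents cancellation at $k$, so the code of $T_\chi(\delta)$ begins $P_s(-1)_o \cdots P(i)_o$ followed by the inserted copy of the $\chi$-code, whose first character must differ from $P(i)_o$ by the no-cancellation condition. Applying $h$ turns the initial segment into that of $S_{i-1}$ (code $P_s(-1)_o \cdots P(i-1)_o$), and the character after it is obtained by shifting the first inserted $\chi$-character. I would verify by direct inspection that this shifted character is not $P(i)_u$, ruling out ``follows $\sigma_i$''.

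The final statement follows by induction: once $g(\delta)$ is shown to begin in $S_{i-1}$ and not follow $\sigma_i$, the only remaining behaviors in $S_{i-1}$ are ``exits immediately'' or ``follows $\sigma_{i-1}$'', so Lemma~\ref{lemma:caseab} applies again with sector index $i-1$, and iterating $i$ times yields $g^i(\delta)$ beginning in $S_0$ and not following $\sigma_1$. The main obstacle I anticipate is the case (a) verification that no $P(i)_u$ appears immediately after the shifted initial segment: this requires careful tracking of both the $T_\chi$-insertion direction (constrained by Lemma~\ref{lem:blocker}(2)) and the $h$-action on codes, which is not a simple index shift (as illustrated by $h(P_s 0_s) = P_s 0_o 1_s$).
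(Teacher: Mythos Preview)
Your argument has a genuine gap in the step ``$h$ sends the spoke $\sigma_j$ to $\sigma_{j-1}$, so $h(S_i)=S_{i-1}$.'' The spokes and sectors are defined relative to $\chi$, which $h$ does not preserve. Concretely, $h(\sigma_j)$ is a segment from $p$ to $p_{P(j)+1}$, and a standard $\chi$ may have exterior punctures between consecutive interior punctures (this is exactly the situation depicted in Figure~\ref{fig:sectors}), in which case $p_{P(j)+1}$ is not an interior puncture and $h(\sigma_j)$ is not a spoke at all. Your case~(b) computation ``$h$ shifts it to $P_s(-1)_o\cdots P(i-1)_oP(i-1)_u$'' silently assumes $P(i)+1=P(i-1)$, i.e.\ the no-gap case; with gaps the image instead ends at an exterior puncture $(P(i)+1)_u$, so $g(\delta)$ does not follow $\sigma_{i-1}$ but rather exits immediately. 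A second, smaller issue: in case~(b) the initial portion $P_s(-1)_o\cdots P(i)_oP(i)_u$ \emph{does} cross $\chi$ at the character $P(i)_u$, so ``fixed by $T_\chi$'' is the wrong reason; what is true is that $P(i)_u$ blocks cancellation via Lemma~\ref{lem:blocker}(2).

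The paper avoids all of this with a single unifying observation you are missing: in \emph{both} cases (a) and (b), $T_\chi(\delta)$ does not merely begin in $S_i$ but actually \emph{initially follows $\sigma_i$}. In case~(a) this is because the counterclockwise twist drags the exiting arc around $\chi$ starting in the $\sigma_i$ direction; in case~(b) it is the blocking of Lemma~\ref{lem:blocker}(2). Once you know $T_\chi(\delta)$ follows $\sigma_i$, applying $h$ immediately yields an arc in $S_{i-1}$ that either follows $\sigma_{i-1}$ (no gaps) or exits immediately (gaps), and in neither case follows $\sigma_i$. This also makes your anticipated ``main obstacle'' in case~(a) disappear: rather than tracking what the first inserted $\chi$-character becomes after $h$, you simply note that the insertion begins with $P(i)_u$, so $T_\chi(\delta)$ follows $\sigma_i$ and the rest is uniform.
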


%%%%%%%%%%%%%%%%%%%%%%%%%%%%%%%%%%%%%%%%%%%%%%%%%%%%%%%%%
%%%%%%%%%%%%%%%%%%%%%%%%%%%%%%%%%%%%%%%%%%%%%%%%%%%%%%%%%

\begin{proof}
In case (a), since $T_\chi$ corresponds to a counterclockwise twist about $\chi$, we see that $T_\chi(\delta)$ initially follows $\sigma_i$  by Lemma~\ref{lem:blocker}(2); see Figure \ref{fig:Followingsigma_i}. In case (b), the arc $\delta$ follows $\sigma_i$, and by Lemma~\ref{lem:blocker}(2), the character $P(i)_u$ in $\delta$ blocks cancellation when applying $T_\chi$; see Figure \ref{fig:Followingsigma_i}. This implies that $T_\chi(\delta)$ follows $\sigma_i$ as well.  Applying $h$, we have that $g(\delta)$ begins in sector $S_{i-1}$.
Moreover, since $T_\chi(\delta)$ initially follows $\sigma_i$, there are two possibilities for the behavior of  $g(\delta)$: either $g(\delta)$ follows $\sigma_{i-1}$ when there are no punctures between $P(i)$ and $P(i-1)$, or $g(\delta)$ exits the sector $S_{i-1}$ immediately.  Either way, $g(\delta)$ does not follow $\sigma_{i}$.

\begin{figure}
\centering	
\begin{overpic}[width=3.25in]{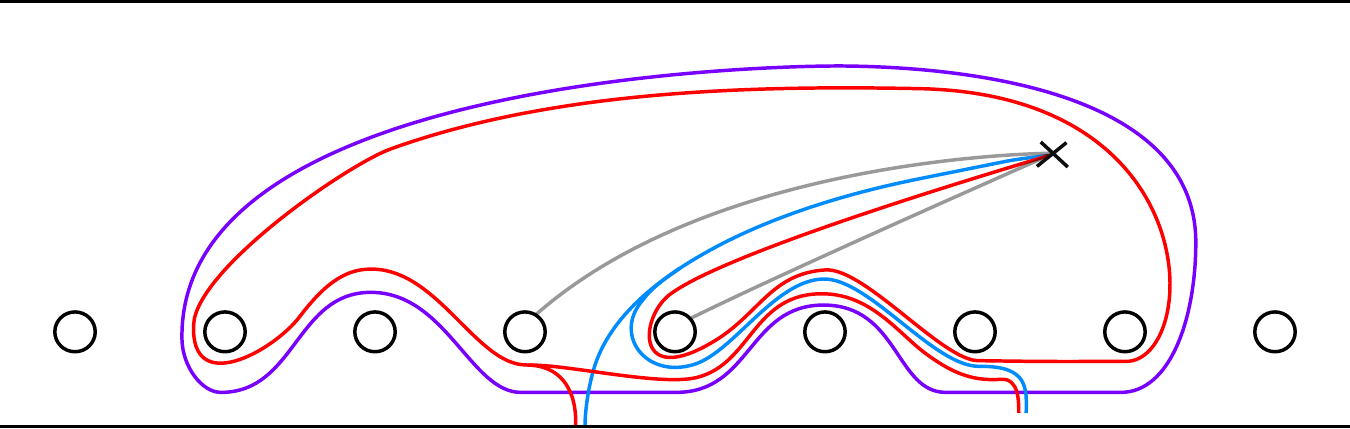}
\put(44,8){\textcolor{cyan}{\small$\delta$}}
\put(44,21){\textcolor{red}{\small$T_\chi(\delta)$}}
\end{overpic}
\caption{In both cases of Lemma~\ref{lemma:caseab}, $T_\chi(\delta)$ follows $\sigma_i$ so that $g(\delta)$ begins in $S_{i-1}$ and does not follow $\sigma_{i}$. Both examples of $\delta$ are shown in blue and $T_\chi(\delta)$ are shown in red.} 
\label{fig:Followingsigma_i}
\end{figure}

Applying this argument $i$ times yields the final statement of the lemma. 
\end{proof}

%%%%%%%%%%%%%%%%%%%%%%%%%%%%%%%%%%%%%%%%%%%%%%%%%%%%%%%%%
%%%%%%%%%%%%%%%%%%%%%%%%%%%%%%%%%%%%%%%%%%%%%%%%%%%%%%%%%

We now analyze the behavior of the image of $\delta$ when neither (a) nor (b) of Lemma \ref{lemma:caseab} holds, that is, when $\delta$ begins in sector $S_i$ and follows $\sigma_{i+1}$.

\begin{lem}\label{lemma:casec}
Let $\delta$ be an arc which begins in sector $S_i$ for some $i\in \{1,\dots,\ell-1\}$.
If $\delta$ initially follows $\sigma_{i+1}$, then there exists $s\in\{1,2\}$ such that $g^s(\delta)$ begins in sector $S_{i-1}$.
Moreover, if $g^s(\delta)$ follows $\sigma_i$, then $s=1$ and either
\begin{enumerate}[(i)]
\item $\delta$ follows $\sigma_{i+1}$ and then follows $\chi$ clockwise long enough to intersect the sector $S_i$ again; or %\overline{\chi}$, or
\item $\delta$ hooks a puncture.
\end{enumerate}
\end{lem}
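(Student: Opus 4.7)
The plan is to carry out a case analysis based on how the reduced code for $\delta$ continues past its initial $\sigma_{i+1}$ portion, which ends with the character $P(i+1)_u$. By the discussion preceding Lemma~\ref{lem:blocker}, this final character already agrees with $\chi$'s clockwise code, so $\delta$ always fellow-travels $\chi$ clockwise for at least one character. I would then split into three mutually exclusive scenarios depending on what $\delta$ does next: (I) $\delta$ hooks a puncture (matching conclusion (ii)); (II) $\delta$ continues fellow-traveling $\chi$ clockwise for enough characters to re-enter $S_i$ (matching conclusion (i)); or (III) $\delta$ fellow-travels $\chi$ clockwise for some $n\ge 1$ characters and then deviates via a character $C$ (going to the back) or $q_u$ for an exterior puncture $q$, without making the near-full clockwise loop of (II).

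For Case (I), I would treat its two subcases separately. In subcase (b) (hooking $p_{P(i+1)}$), we have $T_\chi(\delta)=\delta$ by the last panel of Figure~\ref{fig:HookAPuncture}, so $g(\delta)=h(\delta)$; since $h$ carries $\sigma_{i+1}$ to $\sigma_i$ and the loop around $p_{P(i+1)}$ to one around $p_{P(i)}$, we see $g(\delta)$ follows $\sigma_i$ and begins in $S_{i-1}$. In subcase (a) (hooking an interior puncture $p_k$), Lemma~\ref{lem:blocker}(1) prevents cancellation across $k_{o/u}$, so the initial portion of $T_\chi(\delta)$ can be computed in isolation and is seen to still begin with the code for $\sigma_{i+1}$; applying $h$ then yields $g(\delta)$ following $\sigma_i$ in $S_{i-1}$. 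Case (II) is handled analogously: the counterclockwise twist $T_\chi$ cancels exactly one clockwise loop of $\delta$ around $\chi$, leaving $T_\chi(\delta)$ beginning with $\sigma_{i+1}$, so $g(\delta)=hT_\chi(\delta)$ begins in $S_{i-1}$ following $\sigma_i$. Both Cases (I) and (II) yield $s=1$ together with the ``moreover'' conclusion.

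Case (III) is where the technical work concentrates and where $s=2$ may be required. Using Lemma~\ref{lem:blocker}(2), we may isolate the initial subsegment of $\delta$ (up to and including the deviating character) from its remainder when computing $T_\chi(\delta)$. The key geometric observation is that $T_\chi$ modifies $\delta$'s exit from $\chi$ by adding a counterclockwise loop, so $T_\chi(\delta)$'s initial portion now fellow-travels $\chi$ in the opposite direction before exiting. Applying $h$ shifts everything by one, and I would check, subcase by subcase depending on how far $\delta$ fellow-traveled before deviating and whether it exits via $C$ or via an exterior puncture, whether $g(\delta)$ already begins in $S_{i-1}$ or instead remains in $S_i$ in such a position that Lemma~\ref{lemma:caseab} applied to $g(\delta)$ forces $g^2(\delta)$ into $S_{i-1}$. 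In every Case (III) subcase, the shifted image of the original deviating character prevents $g^s(\delta)$ from beginning with $\sigma_i$'s code, so the ``moreover'' conclusion is vacuous. The main obstacle is the bookkeeping required in Case (III), where the interaction of the counterclockwise loop introduced by $T_\chi$ with the cyclic code for $\chi$ must be tracked carefully in each subcase to guarantee the endpoint lands in $S_{i-1}$ after at most two applications of $g$.
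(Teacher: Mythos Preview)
Your forward case-analysis on $\delta$ is a genuinely different route from the paper's proof. The paper never splits according to what $\delta$ does after following $\sigma_{i+1}$; instead it observes once and for all that the sector $S_i$ is $T_\chi$--invariant, so $T_\chi(\delta)$ must begin in $S_i$ and hence fall into the clean trichotomy \emph{follows $\sigma_i$ / follows $\sigma_{i+1}$ / exits immediately}. Applying $h$ then immediately yields the first statement (with Lemma~\ref{lemma:caseab} handling the one leftover case). For the ``moreover'' clause the paper works \emph{backward}: assuming $g^s(\delta)$ follows $\sigma_i$, it runs through the three cases on $T_\chi(\delta)$ and recovers the shape of $\delta$ by taking $T_\chi^{-1}$--preimages of an explicit initial segment. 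What this buys is that the paper never has to compute $T_\chi$ applied to a complicated $\delta$; it only applies $T_\chi^{-1}$ to a short, known segment of $T_\chi(\delta)$.

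Your approach can be made to work, but there is a real imprecision you should be aware of. In Cases~(I) and (II) you assert that ``$h$ carries $\sigma_{i+1}$ to $\sigma_i$'' and hence that $g(\delta)$ begins in $S_{i-1}$ following $\sigma_i$. This is only literally true when $P(i+1)+1=P(i)$, i.e.\ when there are no exterior punctures between the two interior punctures $p_{P(i+1)}$ and $p_{P(i)}$. In the generic standard curve (cf.\ Figure~\ref{fig:sectors}) such exterior punctures do occur, and then $h(\sigma_{i+1})$ terminates at the \emph{exterior} puncture $p_{P(i+1)+1}$; the image $g(\delta)$ then begins in $S_i$ and exits immediately rather than landing in $S_{i-1}$, so $s=2$ is forced even though $\delta$ satisfies (i) or (ii). This does not break the lemma's conclusion---when $s=2$ the moreover hypothesis fails by Lemma~\ref{lemma:caseab}---but it does break your stated claim that Cases~(I) and (II) always give $s=1$ with $g(\delta)$ following $\sigma_i$. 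The paper's preimage argument avoids this exterior-puncture bookkeeping entirely, which is the main advantage of its organization; see in particular the caption to Figure~\ref{fig:Lemma4.6}, which flags exactly this distinction between Cases~2 and~3.
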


%%%%%%%%%%%%%%%%%%%%%%%%%%%%%%%%%%%%%%%%%%%%%%%%%%%%%%%%%
%%%%%%%%%%%%%%%%%%%%%%%%%%%%%%%%%%%%%%%%%%%%%%%%%%%%%%%%%

\begin{proof}
Since the sector $S_i$ is invariant under $T_\chi$ and applying $T_\chi$ cannot cause cancellation with the initial subarc of $\delta$ contained in $S_i$, it must be the case that $T_\chi(\delta)$  also begins in $S_i$.
Therefore, $T_\chi(\delta)$ must either initially follow $\sigma_i$, initially follow $\sigma_{i+1}$, or exit immediately.
In particular, since $i>0$, this implies that $g(\delta)=h(T_\chi(\delta))$ either begins in sector $S_{i-1}$ or exits $S_i$ immediately. 
In the latter case, $g(\delta)$ fits the hypothesis of Lemma \ref{lemma:caseab}(a) and therefore $g^2(\delta)$ is contained in sector $S_{i-1}$. 
This shows the first statement of the lemma.

We now prove the second statement of the lemma.
Suppose $g^s(\delta)$ follows $\sigma_i$.  If $s=2$, then we applied Lemma~\ref{lemma:caseab}(a) to conclude that $g^2(\delta)$ begins in sector $S_{i-1}$.  However, the moreover statement of that lemma shows that $g^2(\delta)$ does not follow $\sigma_i$.  Therefore $s=1$.

It remains to show that $\delta$ satisfies (i) or (ii).  Recall that we assume that $\delta$ follows $\sigma_{i+1}$.  There are three cases to consider depending on the behavior of $T_\chi(\delta)$.  In the second two cases, we determine the initial behavior of $\delta$ by taking the preimage of an initial segment of $T_\chi(\delta)$, i.e., the image of the segment under $T_\chi^{-1}$.  In general, taking preimages is only well-defined if the initial and terminal characters of the segment are fixed by the mapping class (see \cite[Section 3.4]{AMP} for details).  \\

\noindent {\bf Case 1:} If $T_\chi(\delta)$ follows $\sigma_i$, then $g(\delta)=h(T_\chi(\delta))$ doesn't follow $\sigma_i$.
Indeed, this was the case analyzed in the first paragraph of the proof of Lemma \ref{lemma:caseab} \\

\noindent {\bf Case 2:} Suppose $T_\chi(\delta)$ exits immediately. Since $g(\delta)$ follows $\sigma_i$, $T_\chi(\delta)$ must exit between $P(i)$ and $P(i)-1$.  The character immediately following where $T_\chi(\delta)$ exits is fixed by $T_\chi$ and blocks cancellation by Lemma~\ref{lem:blocker}.  Thus by taking the preimage of this initial segment under $T_\chi^{-1}$, we obtain an initial segment of $\delta$.  As seen in Figure \ref{fig:Lemma4.6}(A), (i) occurs.\\

\begin{figure}
\centering
\begin{subfigure}[t]{.5\textwidth}
\centering
\begin{overpic}[width=2.25in]{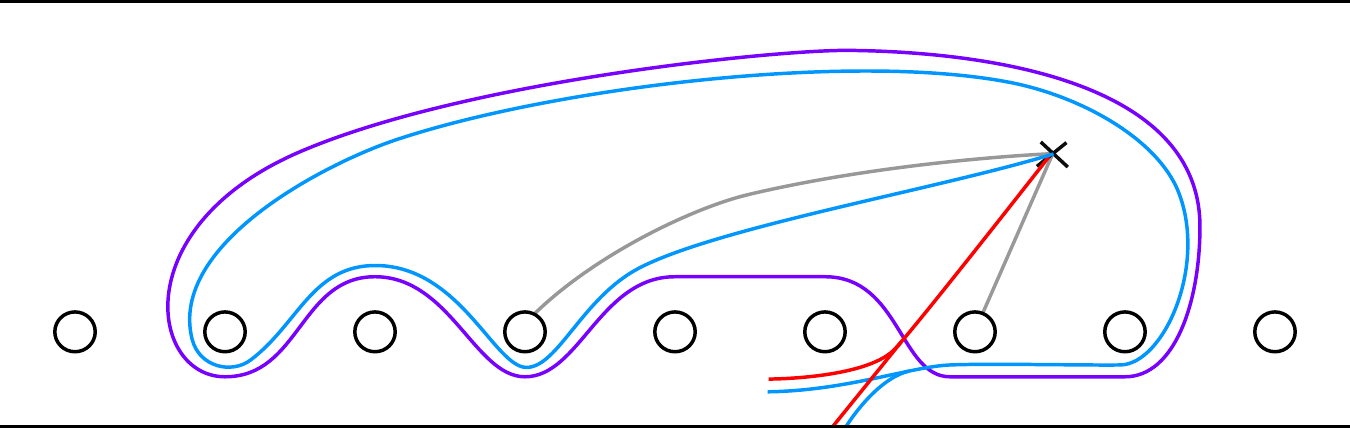}
\put(56,21.5){\textcolor{cyan}{\small $\delta$}}
\put(60.5,12.5){\textcolor{red}{\tiny $T_\chi(\delta)$}}
\end{overpic}
\caption{Case 2}
\end{subfigure}%
\begin{subfigure}[t]{.5\textwidth}
\centering
\begin{overpic}[width=2.25in]{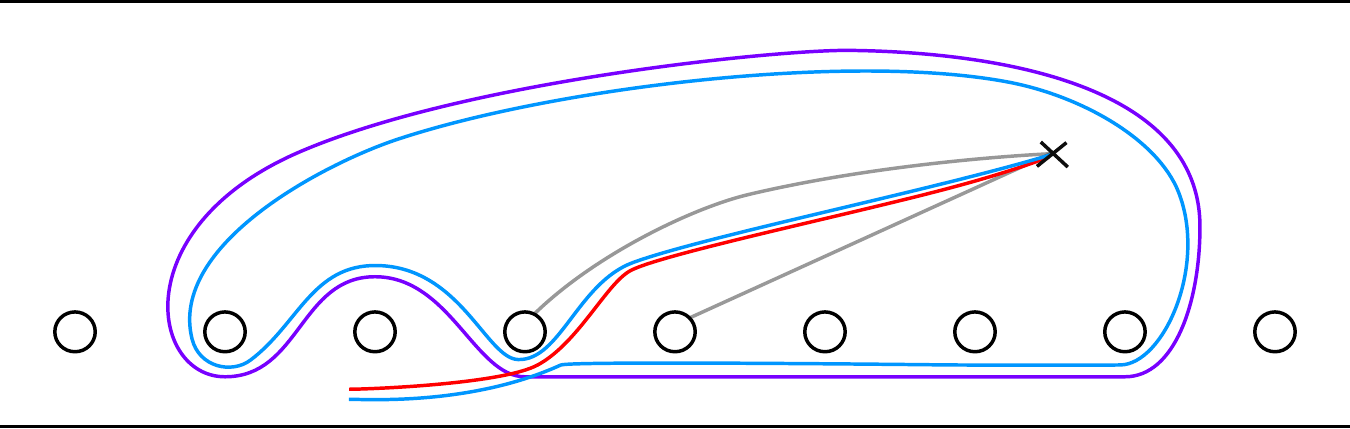}
\put(30,12.5){\textcolor{cyan}{\small $\delta$}}
\put(65,13.5){\textcolor{red}{\tiny $T_\chi(\delta)$}}
\end{overpic}
\caption{Case 3}
\end{subfigure}

\caption{Cases 2 and 3 of Lemma~\ref{lemma:casec}.  Note that the exterior punctures between $P(i+1)$ and $P(i)$ that appear in (A) do not appear in (B).  This is because, under the assumption of the lemma, $g(\delta)$ must follow $\sigma_i$.}
\label{fig:Lemma4.6}
\end{figure}

\noindent {\bf Case 3:} Suppose $T_\chi(\delta)$ follows $\sigma_{i+1}$.  Then after following $\sigma_{i+1}$, $T_\chi(\delta)$ follows $\chi$ in the clockwise direction for at least the character $P(i+1)_u$.  %There are now two possibilities.  
Let $k_{o/u}$ be the first character that $T_\chi(\delta)$ stops following $\chi$.  Then $p_k$ is either an interior or exterior puncture. If $p_k$ is an exterior puncture, then $k_{o/u}$ is fixed by $T_\chi$ and blocks cancellation by Lemma~\ref{lem:blocker}.  By taking the image of this initial segment of $T_\chi(\delta)$ under $T_\chi^{-1}$, we see that (i) occurs.  On the other hand, if $p_k$ is an interior puncture, then $T_\chi(\delta)$ hooks a puncture, and so $k_{o/u}$ blocks cancellation, again by Lemma~\ref{lem:blocker}.  This initial segment is fixed by $T_\chi^{-1}$, and so $\delta$ also hooks a puncture, ensuring that (ii) holds.  See Figure \ref{fig:Lemma4.6}(B) for the first possibility.
\end{proof}

%%%%%%%%%%%%%%%%%%%%%%%%%%%%%%%%%%%%%%%%%%%%%%%%%%%%%%%%%
%%%%%%%%%%%%%%%%%%%%%%%%%%%%%%%%%%%%%%%%%%%%%%%%%%%%%%%%%

\begin{rem}
The reason that $S_\ell$ is excluded from the previous lemma is because $S_\ell$ is bounded by $\sigma_{\ell}$ and $\sigma_0$,
so that arcs in this sector  either follow $\sigma_\ell$ or exit immediately.
\end{rem}

%%%%%%%%%%%%%%%%%%%%%%%%%%%%%%%%%%%%%%%%%%%%%%%%%%%%%%%%%
%%%%%%%%%%%%%%%%%%%%%%%%%%%%%%%%%%%%%%%%%%%%%%%%%%%%%%%%%

The previous two lemmas show that arcs that begin in $S_i$ then begin in $S_{i-1}$ after applying either $g$ or $g^2$.  Iteratively applying these lemmas shows that, after applying $g$ a uniform number of times, the image of any arc $\delta$ begins in $S_0$.  Our goal is to show that the image of $\delta$ follows $\sigma_0$ under iterates of $g$; however this may not be the case the first time this image begins in sector $S_0$. The following lemma gives a precise description of when the image of $\delta$ follows $\sigma_1$.

\begin{cor}\label{cor:sectorshifter}
    Let $\delta$ be an arc which begins in sector $S_i$ for any $i\in\{1,\dots,\ell\}$.
There exists $0\leq k\leq 2\ell-1$ such that the following two properties hold:
\begin{itemize}
    \item the arc $g^{k}(\delta)$ begins in  sector $S_0$; and
    \item if the arc $g^k(\delta)$ follows $\sigma_1$, then $i=k$ and for all $0\le s\le k$, $g^s(\delta)$ begins in $S_{i-s}$, follows $\sigma_{i+1-s}$, and either
    \begin{enumerate}[(i)]
    \item follows $\chi$ clockwise long enough to return to sector $S_{i-s}$; or
    \item hooks a puncture.
    \end{enumerate}
\end{itemize}
  In the special case where the original arc $\delta$ hooks a puncture, then either $g^k(\delta)$ also hooks a puncture or $g^k(\delta)$ does not follow $\sigma_1$.
\end{cor}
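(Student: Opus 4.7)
The plan is to iterate Lemmas~\ref{lemma:caseab} and \ref{lemma:casec} to track the sector containing $g^s(\delta)$ as $s$ grows. From any arc in a sector $S_j$ with $j \in \{1, \ldots, \ell\}$, the three standard-position possibilities (exits immediately, follows $\sigma_j$, or follows $\sigma_{j+1}$) are handled by Lemma~\ref{lemma:caseab} in one application of $g$ (the first two cases) or by Lemma~\ref{lemma:casec} in at most two applications (the third). Since $\sigma_{\ell+1}$ does not exist, the sector $S_\ell$ admits only the first two behaviors, so the first step out of $S_\ell$ costs only one application of $g$. Summing, if $i = \ell$ the worst case is $1 + 2(\ell-1) = 2\ell - 1$, and if $i < \ell$ the worst case is $2i \le 2\ell - 2$. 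Either way there exists $0 \le k \le 2\ell - 1$ with $g^k(\delta) \in S_0$.

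For the second bullet, I will assume $g^k(\delta)$ follows $\sigma_1$ and proceed by downward induction on $s$. At step $k$, if Lemma~\ref{lemma:caseab} had been applied at step $k-1$, its ``moreover'' clause would force $g^k(\delta)$ not to follow $\sigma_1$, a contradiction. So Lemma~\ref{lemma:casec} applied at step $k-1$ with $s_{\mathrm{Lemma}} = 1$, which gives $g^{k-1}(\delta) \in S_1$, follows $\sigma_2$, and satisfies (i) or (ii). The identical dichotomy applies at step $k-2$: for $g^{k-1}(\delta)$ to follow $\sigma_2$, Lemma~\ref{lemma:caseab} is excluded by its ``moreover'' clause, so Lemma~\ref{lemma:casec} must have been applied with $s_{\mathrm{Lemma}} = 1$. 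Iterating downward until $s = 0$ yields the claimed pattern $g^s(\delta) \in S_{i-s}$, follows $\sigma_{i+1-s}$, and satisfies (i) or (ii); since each step used exactly one application of $g$, I conclude $k = i$.

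For the final clause, I assume $\delta$ hooks a puncture and $g^k(\delta)$ follows $\sigma_1$. By the second bullet $k = i$ and at each step $s$ the arc $g^s(\delta)$ satisfies (i) or (ii). The plan is to show by induction on $s$ that hooking is preserved under $g$: if $g^s(\delta)$ hooks a puncture, then so does $g^{s+1}(\delta)$. The key point is that when $g^s(\delta)$ hooks an interior puncture $p_m$, the witnessing character $m_{o/u}$ disagrees with the code for $\chi$ and is a blocker under $g$ by Lemma~\ref{lem:blocker}(3); consequently, the pattern of following a spoke, briefly following $\chi$ clockwise, and then disagreeing at an interior puncture is preserved in $g^{s+1}(\delta)$ with the hooked puncture shifted from $p_m$ to $p_{m+1}$. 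The hard part will be confirming that $p_{m+1}$ remains an interior puncture of $\chi$, which follows from the sector-spoke structure established in the second bullet, since that structure places the shifted hook inside the interior of $\chi$ at each step. Iterating from $s = 0$ to $s = k$ then yields that $g^k(\delta)$ hooks a puncture, as required.
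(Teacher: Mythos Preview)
Your approach is identical to the paper's—iterate Lemmas~\ref{lemma:caseab} and \ref{lemma:casec} and track which case fires at each step—and you supply considerably more detail than the paper's one-sentence proof. The first two parts of your argument are correct as written.

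There is one genuine error in the special-case argument. You invoke Lemma~\ref{lem:blocker}(3) for the hook character $m_{o/u}$, but interior punctures of a standard $\chi$ have index $m\le 0$ (the rightmost is $p_0$), whereas part~(3) applies only to indices in $[1,\infty)$. The correct tool is Lemma~\ref{lem:blocker}(1): hooking blocks cancellation under $T_\chi$, and then $h$ is a pure shift and causes none. With that fix, your forward induction is salvageable: Case~3 in the proof of Lemma~\ref{lemma:casec} shows that when $g^s(\delta)$ hooks and $g^{s+1}(\delta)$ follows $\sigma_{i-s}$, the initial hook segment is fixed by $T_\chi$ and merely shifted by $h$; the sector--spoke structure from your second bullet then ensures the shifted hook character still corresponds to an interior puncture. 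Your justification of this last point is a bit brief, but given that the paper's own proof is a single sentence, it is at the appropriate level of detail.
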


\begin{proof}
This follows from an inductive application of Lemmas \ref{lemma:caseab} and \ref{lemma:casec},  keeping track of which of the different cases is occurring at each step.
\end{proof}

%%%%%%%%%%%%%%%%%%%%%%%%%%%%%%%%%%%%%%%%%%%%%%%%%%%%%%%%%
%%%%%%%%%%%%%%%%%%%%%%%%%%%%%%%%%%%%%%%%%%%%%%%%%%%%%%%%%
 
In the remainder of the section, we use the expanded notation $T_\chi^j$ instead of $T_\chi$ so we can easily record the power of the Dehn twist about $\chi$ that we are applying. 
This power did not affect the proofs before this point.
Let $\phi$ be the starts like function from  \eqref{eqn:startslike}.
Then the key result for showing that $(\alpha_i)_{i\in\N}$ is a quasi-geodesic axis is the following:

\begin{prop}\label{prop:startslikealpha1}
Suppose that either $\ell\ge 2$ or that $\ell=1$ and $g=hT_\chi^j$, where $j>1$.  If $\delta$ is an arc disjoint from $\mathring{\alpha}_0$, then there is some $0\leq k\leq 3\ell+1$ such that $g^{k}(\delta)$ starts like $\alpha_1$.
\end{prop}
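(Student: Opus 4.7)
The plan is a two-step reduction: first, use Corollary~\ref{cor:sectorshifter} to funnel $\delta$ into sector $S_0$ via iterates of $g$; second, a bounded number of additional iterations bring the arc into the configuration required by Lemma~\ref{lem:sigma0toalpha1}, producing an arc that starts like $\alpha_1$.

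First I would put $\delta$ in standard position, so it begins in some sector $S_i$ with $i \in \{0, 1, \ldots, \ell\}$. When $i \geq 1$, Corollary~\ref{cor:sectorshifter} produces an integer $k_0 \leq 2\ell - 1$ with $g^{k_0}(\delta)$ beginning in $S_0$; when $i = 0$, take $k_0 = 0$. Writing $\delta' := g^{k_0}(\delta)$, the arc $\delta'$ lies in $S_0$ and initially either (a) follows $\sigma_0$, (b) exits $S_0$ immediately, or (c) follows $\sigma_1$. In case (a), Lemma~\ref{lem:sigma0toalpha1} finishes the argument at once: $g^{k_0 + 1}(\delta)$ starts like $\alpha_1$, with $k_0 + 1 \leq 2\ell \leq 3\ell + 1$.

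For cases (b) and (c) I would iterate $g$ further, using Lemma~\ref{lem:blocker} to control the code computations. In (b), the absence of exterior punctures between $P(1) = 0$ and $P(0) = 1$ forces the reduced code of $\delta'$ to begin $P_s 0_o C \ldots$, and Lemma~\ref{lem:blocker}(2) shows that $T_\chi^j(\delta')$ follows $\sigma_0$ without cancellation; the next application of $g$ then either falls into case (a) directly, or moves the arc into some sector $S_i$ with $i \geq 1$, where Corollary~\ref{cor:sectorshifter} can be reapplied. In (c), the second bullet of Corollary~\ref{cor:sectorshifter} tightly constrains the history of $\delta$ (each earlier iterate either follows $\chi$ clockwise long enough or hooks a puncture), and a direct code computation using Lemma~\ref{lem:blocker} shows that $g(\delta')$ again reduces to a previous case. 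A careful count bounds the additional iterations beyond the initial $2\ell - 1$ by at most $\ell + 2$, yielding the claimed total of $3\ell + 1$.

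The hard part will be analyzing case (c), where the dynamics of $g$ on arcs in $S_0$ following $\sigma_1$ are subtle, and this is precisely where the hypothesis $\ell \geq 2$ or $j > 1$ must be invoked. In the excluded regime $\ell = j = 1$, such an arc can loop back under $g$ without ever producing an $\alpha_1$-starter; requiring $\ell \geq 2$ supplies enough interior punctures to keep the spoke-following dynamics progressing, while $j > 1$ provides extra twisting that avoids the stall. Throughout, Lemma~\ref{lem:blocker} is the critical technical tool, ruling out unwanted cancellations between the initial and final portions of arcs under iterates of $g^{\pm 1}$, and the hypothesis that $\delta$ is disjoint from $\mathring{\alpha}_0$ (which forbids $\chi$ and $\overline{\chi}$ from appearing in its reduced code) eliminates the otherwise problematic behavior of arcs that fully wrap $\chi$.
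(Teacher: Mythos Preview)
Your overall architecture—push the arc into $S_0$ via Corollary~\ref{cor:sectorshifter}, then finish with Lemma~\ref{lem:sigma0toalpha1}—is exactly the paper's.  But two of your three cases contain genuine gaps.

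In case (b) your central claim is false: $T_\chi^j(\delta')$ does \emph{not} follow $\sigma_0$.  The spoke $\sigma_0$ is the unique one whose endpoint lies outside $\chi$, so an arc obtained from $\delta'$ by twisting about $\chi$ cannot pick up the prefix $P_s0_o1_u$ that ``following $\sigma_0$'' requires.  What actually happens (this is the content of Lemma~\ref{lem:beginsinSj}, which you do not formulate) is that $g(\delta')$ begins in $S_{\ell-1}$, and one must then cycle once more through all the sectors, tracking the copy of $\chi$ introduced by that first twist, before obtaining an arc that follows $\sigma_0$.  This is exactly where the hypothesis $\ell\ge 2$ or $j>1$ enters—contrary to your claim that it is needed only in case (c): when $\ell=j=1$, that single copy of $\chi$ is entirely consumed by one pass through the sectors, and the arc returns to case (b) indefinitely (the elliptic behaviour in the remark following Corollary~\ref{cor:BoundonStartsLike}).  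Your fallback ``reapply Corollary~\ref{cor:sectorshifter}'' therefore does not terminate without this additional argument.

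Case (c) is also substantially more delicate than ``a direct code computation.''  One must first use disjointness from $\mathring\alpha_0$ to exclude alternative (i) of Corollary~\ref{cor:sectorshifter} for the original $\delta$, forcing the hooking case when $i\ge 1$; the hooking case and the $i=0$ non-hooking case then diverge, and the latter splits further according to the sector in which $\delta$ exits and the first post-exit character, with the hypothesis $\ell\ge 2$ (when $j=1$) invoked again to ensure a certain iterate avoids $S_0$.  None of this structure appears in your sketch, and without it the bound $3\ell+1$ cannot be obtained.
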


Before beginning the proof, we prove one additional lemma which will be used frequently throughout the proof of the proposition.

\begin{lem}\label{lem:beginsinSj}
Under the assumptions of Proposition~\ref{prop:startslikealpha1}, if $\delta$ begins in sector $S_i$ for any $i\in\{0,\dots,\ell\}$ and exits immediately or follows $\sigma_i$, then $g^{k}(\delta)$ starts like $\alpha_1$ for some $0\leq k\leq i+\ell+1$.
\end{lem}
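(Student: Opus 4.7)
My approach is an iterative case analysis driven by $i$ and the initial behavior of $\delta$, chaining together Lemma~\ref{lem:sigma0toalpha1} and Lemma~\ref{lemma:caseab} to push the arc into the configuration ``begins in $S_0$ and follows $\sigma_0$'', at which point Lemma~\ref{lem:sigma0toalpha1} finishes in one more step. First I would dispatch the case $i = 0$ with $\delta$ following $\sigma_0$ directly via Lemma~\ref{lem:sigma0toalpha1}, giving $k = 1 \leq \ell + 1$. For $i \geq 1$ with $\delta$ either following $\sigma_i$ or exiting immediately, Lemma~\ref{lemma:caseab} puts $g^i(\delta)$ in $S_0$ and ensures it does not follow $\sigma_1$. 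Since an arc beginning in $S_0$ has only three possibilities (follow $\sigma_0$, follow $\sigma_1$, or exit immediately) and the middle option is excluded, $g^i(\delta)$ either follows $\sigma_0$---yielding $k = i+1 \leq i + \ell + 1$ via Lemma~\ref{lem:sigma0toalpha1}---or exits immediately.

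The residual case, common to both $i = 0$ exiting immediately and the leftover from the previous step, is an arc $\delta'$ beginning in $S_0$ and exiting immediately, whose reduced code has the form $P_s\,0_o\,C\,\delta''$. Here my plan is to compute $g(\delta')$ explicitly by code manipulation: the counterclockwise twist $T_\chi$ inserts a full counterclockwise traversal of $\chi$'s code at the unique crossing of $\delta'$ with $\chi$, and then the shift $h$ adds one to every puncture index. Using Lemma~\ref{lem:blocker} to track which characters block cancellation and then putting the resulting arc back into standard position, one identifies the sector in which $g(\delta')$ begins and which spoke (if any) it follows. The expected outcome is that $g(\delta')$ lies in some sector $S_j$ with $1 \leq j \leq \ell$ and initially follows $\sigma_j$, at which point Lemma~\ref{lemma:caseab} gives that $g^{j+1}(\delta')$ begins in $S_0$ and does not follow $\sigma_1$.

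The main obstacle is twofold: first, carrying out the code manipulation carefully enough to pin down exactly which sector $g(\delta')$ begins in and which spoke it follows, so that the bound $\ell$ on subsequent iterations is correct; and second, showing that under the hypothesis of Proposition~\ref{prop:startslikealpha1}, the iterate $g^{j+1}(\delta')$ follows $\sigma_0$ rather than again exiting immediately, so that one last application of $g$ combined with Lemma~\ref{lem:sigma0toalpha1} closes out the argument. This second point is precisely where the hypothesis enters: the degenerate excluded case $(\ell = 1, j = 1)$ is exactly the configuration in which the ``$S_0$ exits immediately'' behavior would loop indefinitely under iteration of $g$, so its exclusion is what ensures the process terminates within the required number of steps, yielding the stated bound $k \leq i + \ell + 1$.
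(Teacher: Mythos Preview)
Your approach is essentially the same as the paper's: reduce to $S_0$ via Lemma~\ref{lemma:caseab}, finish with Lemma~\ref{lem:sigma0toalpha1} if the arc follows $\sigma_0$, and otherwise analyze one more application of $g$ to re-enter the Lemma~\ref{lemma:caseab} machine. You also correctly identify that the hypothesis $(\ell,j)\neq(1,1)$ is exactly what prevents the ``exits immediately from $S_0$'' behavior from recurring.

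One small imprecision to fix when you carry out the code computation: the image $g(\delta')$ of an arc $\delta'$ that begins in $S_0$ and exits immediately does not land in ``some $S_j$ with $1\le j\le \ell$'' but specifically in $S_{\ell-1}$ (since $T_\chi^j(\delta')$ begins in $S_\ell$ and follows $\sigma_\ell$, and then $h$ shifts one sector to the right). This matters in two ways: first, it gives the sharp count $k=i+\ell+1$ rather than $i+\ell+2$; second, when $\ell=1$ you get $S_0$ itself, which falls outside your stated range $1\le j\le \ell$ and must be handled directly (the hypothesis $j\ge 2$ then guarantees enough residual $\chi$-code to force following $\sigma_0$). The paper's argument for why the second pass through $S_0$ cannot exit immediately is exactly your intuition, made precise: the $C$ character is now preceded by leftover characters from the inserted copies of $\chi$, and under the standing hypothesis at least one such character survives the $\ell-1$ applications of Lemma~\ref{lemma:caseab}.
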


\begin{proof}
For $i \neq 0$, if $\delta$ exits immediately or follows $\sigma_i$, then Lemma \ref{lemma:caseab}(1) shows that $g^{i}(\delta)$ begins in sector $S_0$ and does not follow $\sigma_1$. Of course, when $i=0$ this fact about $\delta$ holds by assumption, so we allow for the possibility that $i=0$ as well.  If $g^i(\delta)$ follows $\sigma_0$, then it follows from Lemma \ref{lem:sigma0toalpha1} that $g^{i+1}(\delta)$ starts like $\alpha_1$.  

On the other hand, suppose $g^i(\delta)$ exits immediately.   Then $T^j_\chi(g^i(\delta))$ begins in $S_\ell$ and follows $\sigma_\ell$.  Thus $g^{i+1}(\delta)=h(T^j_\chi(g^i(\delta)))$ begins in $S_{\ell-1}$ and either follows $\sigma_{\ell-1}$ or exits immediately.  We now repeat the reasoning of this lemma for the element $g^{i+1}(\delta)$, and we obtain that $g^{i+\ell}(\delta)$ starts in $S_0$ and either follows $\sigma_0$ or exits immediately.  However, notice that since $P(1)=P(0)+1$, the only way an arc that begins in $S_0$ can exit immediately is if it goes to the back of the surface right after exiting.  By construction, $g^{i+\ell}(\delta)$ will not immediately go to the back; to see this, note that  applying $T^j_\chi$ to $\delta$ introduces a copy of $\chi$ before the arc goes to the back of the surface.  Since we are assuming that $\ell>1$ or $j\ge 2$, the image of some portion of this copy of $\chi$ will remain as we apply Lemma~\ref{lemma:caseab}, and it will always occur before the image of $\delta$ goes to the back. Therefore, $g^{i+\ell}(\delta)$ follows $\sigma_0$, and   $g^{i+\ell+1}(\delta)$ starts like $\alpha_1$ by Lemma~\ref{lem:sigma0toalpha1}.
 \end{proof}

%%%%%%%%%%%%%%%%%%%%%%%%%%%%%%%%%%%%%%%%%%%%%%%%%%%%%%%%%
%%%%%%%%%%%%%%%%%%%%%%%%%%%%%%%%%%%%%%%%%%%%%%%%%%%%%%%%%
We now turn to the proof of the proposition.
\begin{proof}[Proof of Propositin~\ref{prop:startslikealpha1}]
Suppose that $\delta$ begins in sector $S_i$ for any $i\in\{0,\dots,\ell\}$. Then by Corollary~\ref{cor:sectorshifter}, there exists $0 \leq k_0 \leq 2\ell -1$ such that $g^{k_0}(\delta)$ starts in sector $S_0$. If $g^{k_0}(\delta)$ starts like $\sigma_0$ or exits immediately,
Lemma~\ref{lem:beginsinSj} shows that $g^{{k_0}+\ell+1}(\delta)$ starts like $\alpha_1$. Let $j_0 = k_0 + \ell + 1 \leq 3\ell$.

By Corollary \ref{cor:sectorshifter}, the remaining case to analyze is when $k_0 = i$ and $g^i(\delta)$ begins in sector $S_0$ and follows $\sigma_1$.

If $i=0$, then $\delta$ itself begins in sector $S_0$ and follows $\sigma_1$.  In this case, since $\delta$ is disjoint from $\mathring{\alpha}_0$ by assumption, either $\delta$ hooks a puncture or it follows $\chi$ clockwise for at least the character $0_u$ but not long enough to intersect the sector $S_0$ again, and then exits.

 On the other hand, if $i\neq 0$, then we conclude from Corollary \ref{cor:sectorshifter} that $g^s(\delta)$ follows $\sigma_{i+1-s}$ for all $0\le s\le i$ and  that the initial segment of $g^s(\delta)$ satisfies either (i) or (ii) of that corollary.  
  Again using that $\delta$ is disjoint from $\mathring{\alpha}_0$, we see that $\delta$ cannot follow $\chi$ long enough to intersect the sector $S_i$ again. 
Thus, when $i\neq 0$, we must be in the case of Corollary \ref{cor:sectorshifter}(ii).  
  
To summarize, we are further reduced to the following two  cases:

\begin{enumerate}
\item $i\ge 0$ and $\delta$ begins in sector $S_i$, follows $\sigma_{i+1}$, and hooks a puncture.
\item $i=0$ and $\delta$ begins in sector $S_0$, follows $\sigma_{1}$, follows $\chi$ clockwise for at least one character but not long enough to intersect $S_0$ again, 
and then exits.  
Note that in this case $\delta$ does not hook a puncture.
\end{enumerate}

\noindent{\bf Case 1:} Assume first that (1) holds.   
Since $g^i(\delta)$ follows $\sigma_1$,  the moreover statement from Corollary~\ref{cor:sectorshifter} implies that $g^i(\delta)$ must also hook a puncture. This hooking blocks cancellation in $T^j_\chi(g^i(\delta))$ by Lemma~\ref{lem:blocker}.  Therefore, a reduced code for $T^j_\chi(g^i(\delta))$ begins with $P_s0_o0_u$, and so a reduced code for $g^{i+1}(\delta)$ begins with $P_s0_o1_o1_u$.  In particular,  $g^{i+1}(\delta)$ begins in sector $S_\ell$ and exits immediately.  By Lemma~\ref{lem:beginsinSj} applied to $g^{i+1}(\delta)$, there exists $0 \leq k_1\leq 2\ell + 1$ such that $g^{i+1+{k_1}}(\delta)$ starts like $\alpha_1$. Let $j_1 = i+1+ k_1 \le 3\ell+2$. \\

\noindent{\bf Case 2:} Next, assume that (2) holds, so that $i=0$  and $\delta$ begins in sector $S_0$, follows $\sigma_1$, then turns left and follows $\chi$ clockwise for at least one character but not long enough to return to sector $S_0$, and then exits. 
There are now two possibilities to consider, depending on whether $\delta$ exits in $S_\ell$ or not.  First, if $\delta$ exits in $S_r$ for some $0<r<\ell$, then $T^j_\chi(\delta)$ begins in $S_\ell$ and follows $\sigma_\ell$ (Figure~\ref{fig:WhereDeltaExits1}).  In particular, $g(\delta)$ begins in $S_{\ell-1}$ and does not follow $\sigma_\ell$.  Therefore, by applying Lemma~\ref{lem:beginsinSj} to $g(\delta)$, there is some $0\leq k_2\leq 2\ell$ such that $g^{1+{k_2}}(\delta)$ starts like $\alpha_1$. Let $j_2 = 1+ k_2 \leq 2\ell +1$.

\begin{figure}[h]
\begin{center}
\begin{subfigure}[t]{.5\textwidth}
\centering
\begin{overpic}[width=2.25in, trim={0 0 0 0}, clip]{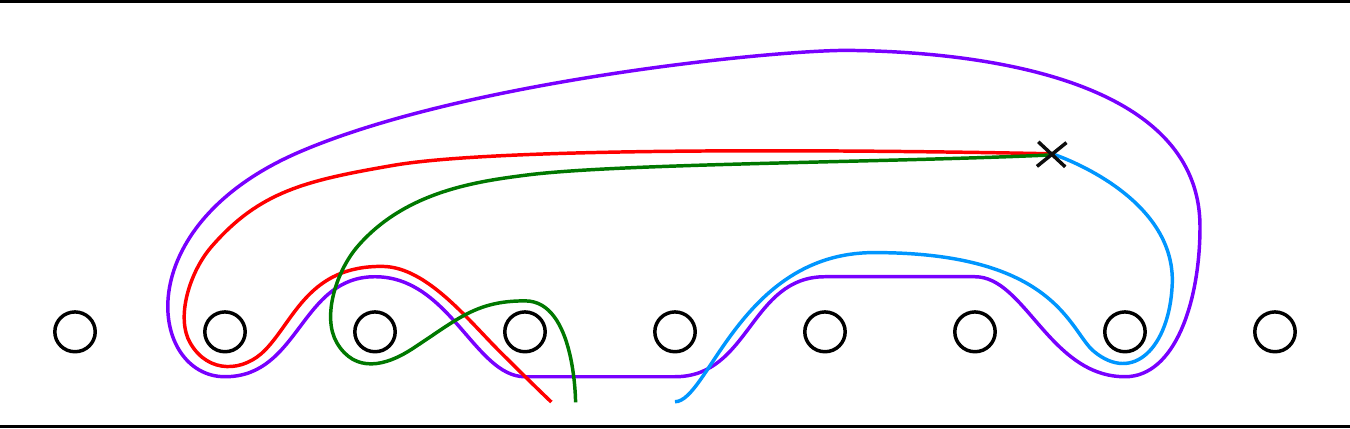}
\put(65,14){\textcolor{cyan}{\tiny$\delta$}}
\put(50,23){\textcolor{red}{\tiny$T_\chi(\delta)$}}
\put(30,14.5){\textcolor{ForestGreen}{\tiny$g(\delta)$}}
\end{overpic}
\vspace{.2in}
\end{subfigure}%
\begin{subfigure}[t]{.5\textwidth}
\begin{overpic}[width=2.25in, , trim={0 0 0 0}, clip]{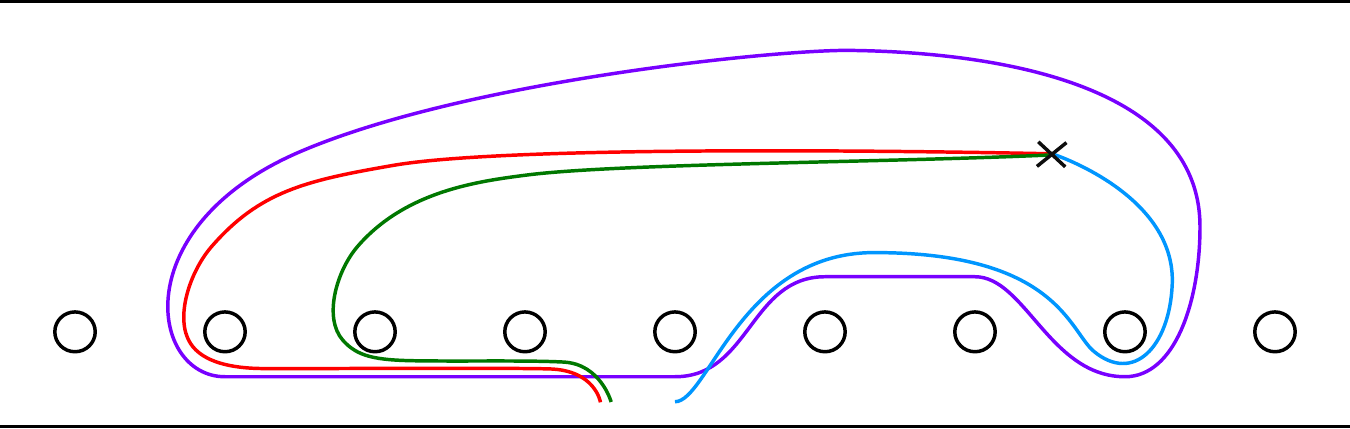}
\put(65,14){\textcolor{cyan}{\tiny$\delta$}}
\put(50,23){\textcolor{red}{\tiny$T_\chi(\delta)$}}
\put(30,14.5){\textcolor{ForestGreen}{\tiny$g(\delta)$}}
\end{overpic}
\end{subfigure}
\caption{Case (2) of the proof of Proposition~\ref{prop:startslikealpha1} if $\delta$ exits in $S_r$ for some $0 < r < l$.}
\label{fig:WhereDeltaExits1}
\end{center}
\end{figure}

\begin{figure}[h]
\begin{center}
\begin{subfigure}[t]{.5\textwidth}
\centering
\begin{overpic}[width=2.25in]{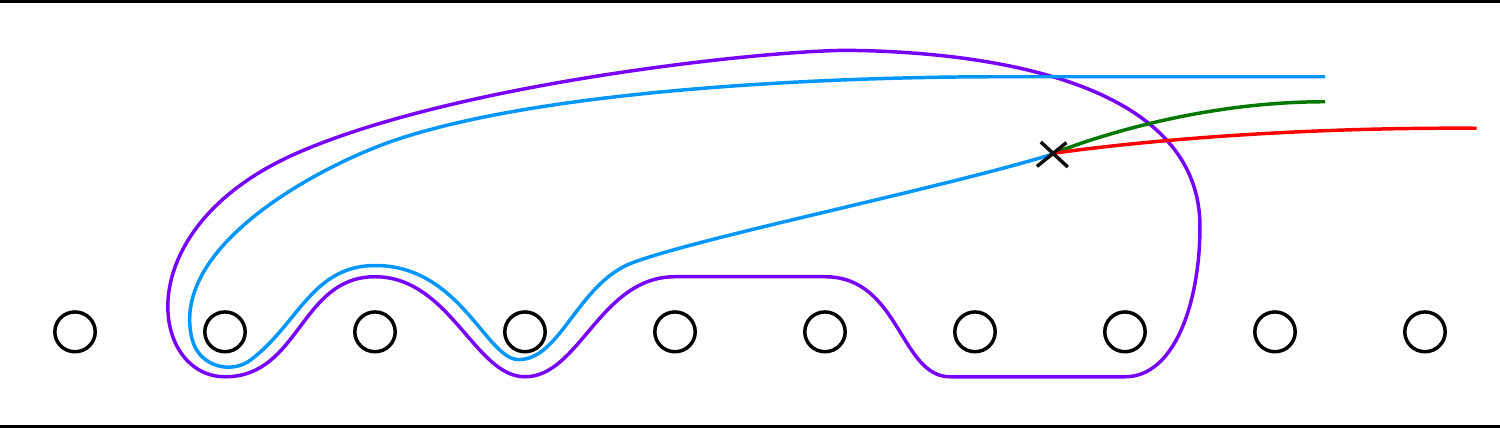}
\put(48,19.5){\textcolor{cyan}{{\tiny$\delta$}}}
\put(90,21.5){\textcolor{ForestGreen}{\tiny$T_\chi(\delta)$}}
\put(91,17){\textcolor{red}{\tiny$g(\delta)$}}
%\put(-15, 16){(a)}
\end{overpic}
\caption{}
\end{subfigure}%%%
\begin{subfigure}[t]{.5\textwidth}
\centering
\begin{overpic}[width=2.25in]{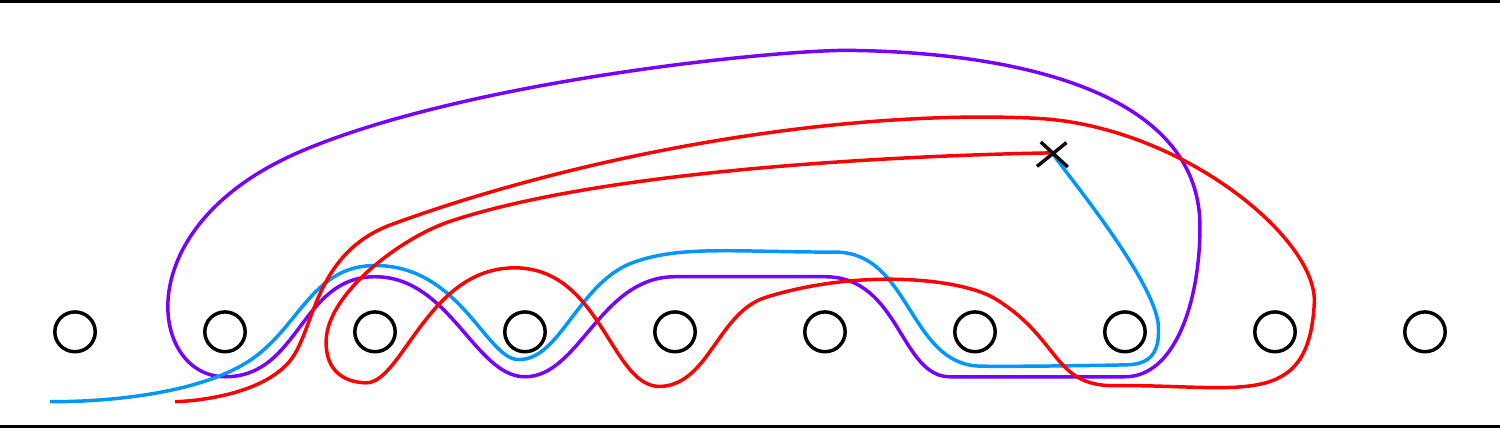}
\put(75,13){\textcolor{cyan}{\tiny$\delta$}}
\put(48,14.5){\textcolor{red}{\tiny$g(\delta)$}}
%\put(-15, 16){(b)}
\end{overpic}
\caption{}
\end{subfigure}%%%%%%%%%%%%%%%%%%
\par\bigskip
\begin{subfigure}[t]{.5\textwidth}
\centering
\begin{overpic}[width=2.25in]{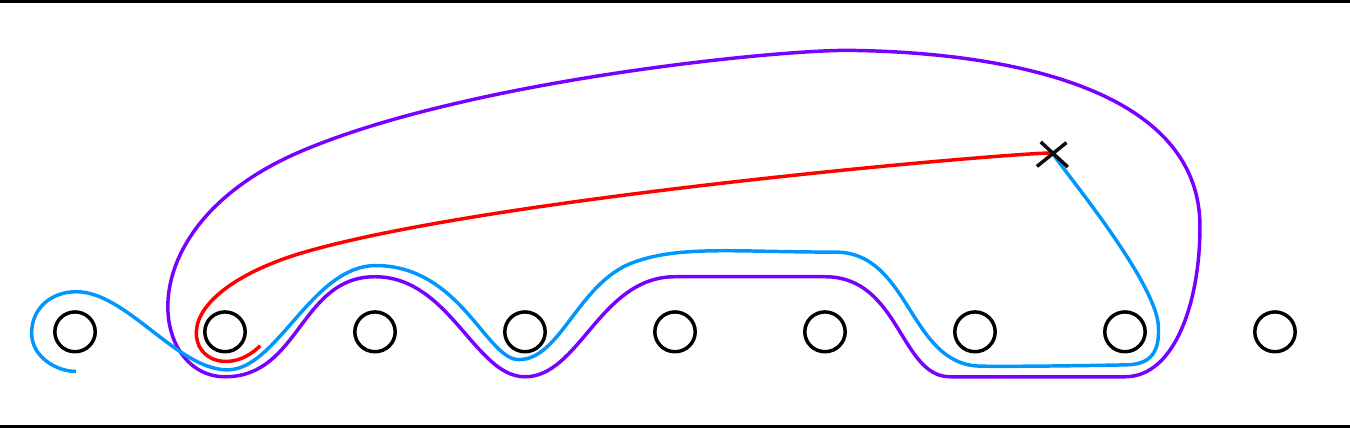}
\put(84,13){\textcolor{cyan}{\tiny$\delta$}}
\put(48,19.5){\textcolor{red}{\tiny$g(\delta)$}}
%\put(-15, 16){(c1)}
\end{overpic}
\caption{}
\end{subfigure}%%%
\begin{subfigure}[t]{.5\textwidth}
\centering
\begin{overpic}[width=2.75in]{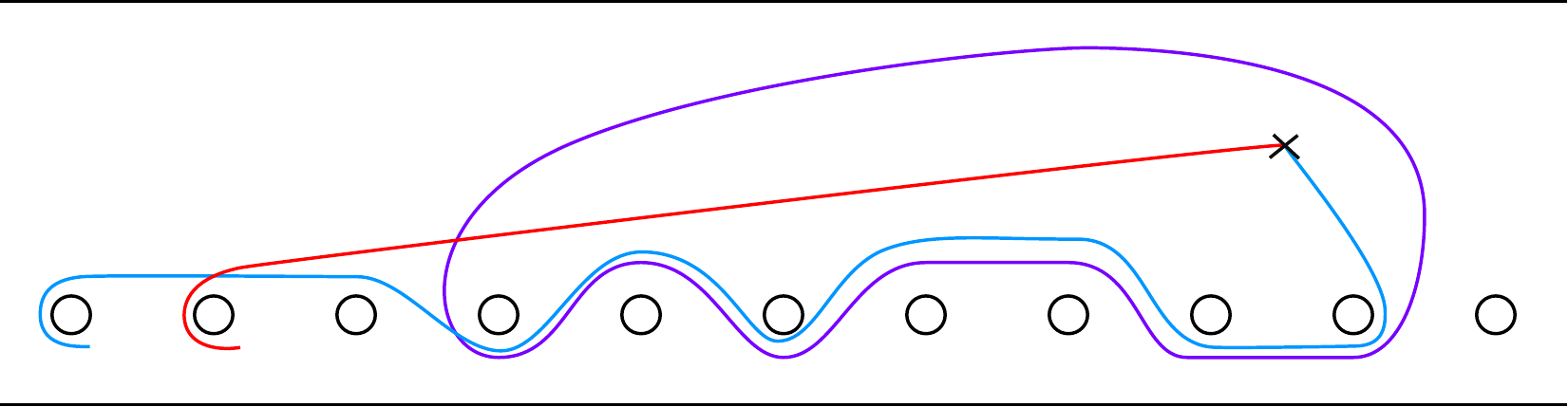}
\put(87,11){\textcolor{cyan}{\tiny$\delta$}}
\put(48,15){\textcolor{red}{\tiny$g(\delta)$}}
%\put(0, 16){(c2)}
\end{overpic}
\caption{}
\end{subfigure}%%%%%%%%%%%
\par\bigskip
\begin{subfigure}{.5\textwidth}
\centering
\begin{overpic}[width=2.25in]{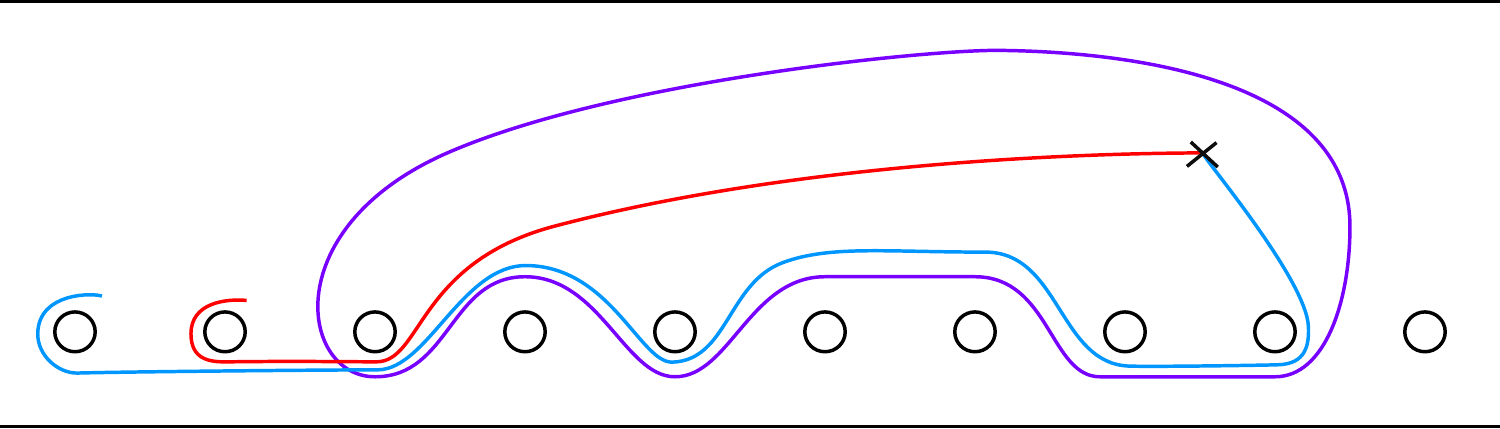}
\put(85,13){\textcolor{cyan}{\tiny$\delta$}}
\put(48,18){\textcolor{red}{\tiny$g(\delta)$}}
%\put(-15, 16){(c3)}
\end{overpic}
\caption{}
\end{subfigure}
\caption{Case (2) of the proof of Proposition~\ref{prop:startslikealpha1} if  $\delta$ begins in sector $S_0$, follows $\sigma_1$ and then exits in $S_\ell$.}
\label{fig:WhereDeltaExits}
\end{center}
\end{figure}

 On the other hand, suppose $\delta$ follows $\sigma_1$ and then exits in $S_\ell$. Let $q_{o/u}$ be the first character of a reduced code for $\delta$ after $\delta$ exits; we allow for the possibility that $q=C$.  If $q>P(\ell)$ or $q=C$ (Figure~\ref{fig:WhereDeltaExits}(A)), then $T^j_\chi(\delta)$ begins in $S_\ell$ and exits immediately. Therefore, so does $g(\delta)$. 
By Lemma~\ref{lem:beginsinSj} applied to $g(\delta)$, there exists $0 \leq k_3 \leq 2\ell+1$ such that $g^{k_3 +1}(\delta)$ starts like $\alpha_1$. Let $j_3 = k_3 + 1 \leq 2\ell+2$. If $q<P(\ell)$,  then we must have $q=P(\ell)-1$.  If $j\ge 2$, then $T^j_\chi(\delta)$  begins in $S_\ell$ and follows $\sigma_\ell$ (see Figure~\ref{fig:WhereDeltaExits}(B)). Therefore, $g(\delta)$ begins in $S_{\ell-1}$ and does not follow $\sigma_{\ell}$. The argument again concludes by applying Lemma~\ref{lem:beginsinSj}: there exists $0 \leq k_2 \leq 2\ell$ such that $g^{{k_2}+1}(\delta)$ starts like $\alpha_1$.

If $j=1$, then either: $g(\delta)$   begins in $S_{\ell-1}$ and follows $\sigma_\ell$; $g(\delta)$ begins in $S_\ell$ and follows $\sigma_\ell$; or $g(\delta)$ begins in $S_\ell$ and exits immediately.  Which of these occurs depends on the behavior of $\delta$ after exiting (see Figure \ref{fig:WhereDeltaExits}(C)-(E)).  In the latter two cases, Lemma~\ref{lem:beginsinSj} implies that there exists $0 \leq k_4 \leq 2\ell+1$ such that $g^{{k_4}+1}(\delta)$ starts like $\alpha_1$; let $j_4 = k_4 + 1 \leq 2\ell +2$. 
In the former case, note that $\ell\ge 2$ implies that $\ell-1\ge 1$, and hence $g(\delta)$ does not begin in sector $S_0$.
Moreover,  the initial form of $\delta$ ensures that $g(\delta)$ neither hooks a puncture nor follows $\chi$ long enough to intersect the sector $S_{\ell-1}$ again; see Figure~\ref{fig:WhereDeltaExits}(E).
In particular, applying Corollary \ref{cor:sectorshifter}, we conclude that $g^{k_0+1}(\delta)$ begins in sector $S_0$ and does not follow $\sigma_1$, for the constant $k_0 \leq 2\ell+1$ at the beginning of the proof. 
Applying Lemma \ref{lem:beginsinSj} as in the beginning of the proof, we conclude that there exists $j_{5}= k_0+ 1 + \ell + 1$ for which $g^{j_5}(\delta)$ starts like $\alpha_1$, where $j_5\le 3\ell+1$.

Taking $j = \max\{j_0, j_1, j_2, j_3, j_4, j_5\}$, we have that $j \leq 3\ell+1$ and that for any arc $\delta$ disjoint from $\mathring \alpha_0$, there is some $k \leq j$ such that $g^k(\delta)$ starts like $\alpha_1$.
\end{proof}

% \priyam{Nick will add a remark pinpointing what falls apart when $\ell = 1$ and we do one twist}

\begin{cor}\label{cor:BoundonStartsLike}
Suppose that either $\ell\ge 2$ or that $\ell=1$ and $g=hT_\chi^j$, where $j > 1$.
If $\phi(\gamma) = i$ for some $i>3\ell$ and $\delta$ is disjoint from $\gamma$, then $|\phi(\delta) - \phi(\gamma)| \leq 3\ell$.
\end{cor}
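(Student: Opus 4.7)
The plan is to reduce to Proposition~\ref{prop:startslikealpha1} via the equivariance provided by Lemma~\ref{lem:shiftstartslike}. By the symmetry of the setup, it suffices to prove the one-sided bound $\phi(\delta) \ge \phi(\gamma) - 3\ell$. Indeed, if $\phi(\delta) \le \phi(\gamma) = i$, then $\phi(\gamma) - \phi(\delta) \le 3\ell$ is exactly this one-sided bound. On the other hand, if $\phi(\delta) > \phi(\gamma)$, then $\phi(\delta) > i > 3\ell$, so the roles of $\delta$ and $\gamma$ can be exchanged (disjointness is symmetric) and the same one-sided bound applied to the swapped pair yields $\phi(\delta) - \phi(\gamma) \le 3\ell$.

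First, I would translate the configuration by $g^{-i}$. Since $\gamma$ starts like $\alpha_i$ with $i \ge 1$ and $i+(-i) = 0$, Lemma~\ref{lem:shiftstartslike} gives that $g^{-i}(\gamma)$ starts like $\alpha_0$. Unpacking this, the proof of that lemma shows that the subpath $\mathring\alpha_i$ appearing in $\gamma$ is carried by $g^{-i}$ to the subpath $\mathring\alpha_0$ of $g^{-i}(\gamma)$ (the characters that block cancellation at each step are $(k+1)_u$ for $k \ge 1$, so iterating $g^{-1}$ down to $\alpha_0$ is legal). Since $g$ acts by isometries on $\mathcal{A}(S,p)$ and $\delta$ is disjoint from $\gamma$, the translate $g^{-i}(\delta)$ is disjoint from $g^{-i}(\gamma)$, and in particular from the subarc $\mathring\alpha_0$.

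Next, Proposition~\ref{prop:startslikealpha1} applied to $g^{-i}(\delta)$ produces an integer $0 \le k \le 3\ell+1$ such that $g^k(g^{-i}(\delta)) = g^{k-i}(\delta)$ starts like $\alpha_1$. Because $i > 3\ell$ forces $i \ge 3\ell + 1 \ge k$, one has $i - k \ge 0$, so a second application of Lemma~\ref{lem:shiftstartslike}, now with starts-like index $1$ and shift $j = i - k$, yields that $\delta$ itself starts like $\alpha_{1 + i - k}$. Therefore
\[
\phi(\delta) \;\ge\; 1 + i - k \;\ge\; 1 + i - (3\ell + 1) \;=\; i - 3\ell,
\]
which is the desired one-sided bound.

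Most of the substantive work has already been done in Proposition~\ref{prop:startslikealpha1}, so no genuine obstacle arises in the corollary itself. The only point that requires care is compatibility between the ``initial or terminal'' flexibility in the definition of starts like and the translation by $g^{-i}$; this is exactly what is verified inside the proof of Lemma~\ref{lem:shiftstartslike} (via the cancellation-blocking characters of Lemma~\ref{lem:blocker}), so the remainder of the argument is bookkeeping against the uniform constant $3\ell+1$.
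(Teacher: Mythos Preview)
Your proof is correct and follows essentially the same route as the paper's own argument: translate by $g^{-i}$ using Lemma~\ref{lem:shiftstartslike}, apply Proposition~\ref{prop:startslikealpha1} to $g^{-i}(\delta)$, then shift back and invoke symmetry in $\gamma,\delta$ for the other inequality. Your explicit check that $i-k\ge 0$ before the second use of Lemma~\ref{lem:shiftstartslike} is a small point the paper leaves implicit.
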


\begin{proof}
If $\gamma$ starts like $\alpha_i$ and $\delta$ is disjoint from $\gamma$, then $g^{-i}(\gamma)$ starts like $\alpha_0$ and $g^{-i}(\delta)$ is disjoint from $g^{-i}(\gamma)$ by Lemma~\ref{lem:shiftstartslike}. Thus, Proposition~\ref{prop:startslikealpha1} applied to $g^{-i}(\delta)$ shows that there exists some $k \leq 3\ell+1$ such that $g^{-i+k}(\delta)$ starts like $\alpha_1$. Applying Lemma~\ref{lem:shiftstartslike} again shows that $\delta$ starts like $\alpha_{i-k+1}$ and where $i-k+1 \geq i - 3\ell$, and so $\phi(\delta) \geq i-3\ell$.  Asymmetric argument swapping the roles of $\delta$ and $\gamma$ in the proof above shows that $\phi(\delta) \leq i + 3\ell$. Therefore,  if $\delta$ is disjoint from $\gamma$, then $|\phi(\delta) - \phi(\gamma)| \leq 3\ell$.
\end{proof}

\begin{rem}
If $g = hT_\chi$ (a single Dehn twist composed with $h$) and $\ell=1$, then Theorem \ref{thm:main} is actually false.
Indeed, $g$ will act elliptically with respect to the action on $\mc A(S,p)$. To see this, consider the infinite sequence of pairwise disjoint arcs $(\delta_i)_{i\in\mathbb{Z}}$ for which $\delta_i=g^i(\delta_0)$, as in Figure~\ref{fig:Ellipticg}.
Such arcs form an infinite $1$-simplex in $\mc A(S,p)$ preserved by $g$.

\begin{figure}
    \centering
    \begin{overpic}[width=3in]{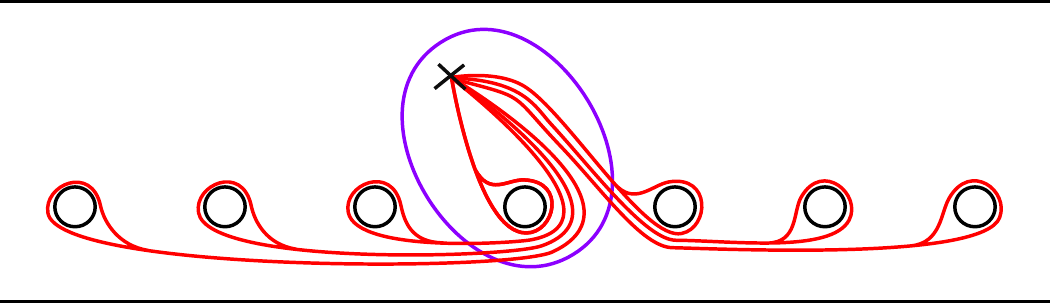}
    \put(5,12.5){\small$\delta_{-3}$}
    \put(19.5,12.5){\small$\delta_{-2}$}
    \put(33.5,12.5){\small$\delta_{-1}$}
    \put(47.5,12.5){\small$\delta_{0}$}
    \put(63.5,12.5){\small$\delta_{1}$}
    \put(77.5,12.5){\small$\delta_{2}$}
    \put(91.5,12.5){\small$\delta_{3}$}
    \end{overpic}
    \caption{A collection of pairwise disjoint arcs $(\delta_i)_{i\in\mathbb{Z}}$, represented as train tracks, such that $\cup_i\delta_i$ is fixed by the composition of a shift and single twist about $\chi$, when $\ell=1$.}
    \label{fig:Ellipticg}
\end{figure}

\end{rem}

%%%%%%%%%%%%%%%%%%%%%%%%%%%%%%%%%%%%%%%%%%%%%%%%%%%%%%%%%
%%%%%%%%%%%%%%%%%%%%%%%%%%%%%%%%%%%%%%%%%%%%%%%%%%%%%%%%%

\section{Proof of main theorem}

Let $\phi$ be the starts like function defined in \eqref{eqn:startslike}, and let $M=3\ell$.

\begin{lem}\label{lem:phitoASp}
For any $\gamma,\delta\in\mc A(S,p)$, we have $d_{\mc A(S,p)}(\gamma,\delta)\geq \frac{1}{M}\vert \phi(\gamma)-\phi(\delta)\vert$.
\end{lem}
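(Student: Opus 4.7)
The plan is to use Corollary~\ref{cor:BoundonStartsLike} combinatorially along a geodesic path in $\mathcal{A}(S,p)$. More precisely, I would first establish the ``one-step'' bound: if $\gamma,\delta\in\mc A(S,p)$ are disjoint, then $|\phi(\gamma)-\phi(\delta)|\leq M=3\ell$. This bound is immediate in the three possible cases. If $\phi(\gamma)>3\ell$, Corollary~\ref{cor:BoundonStartsLike} applied directly to $\gamma,\delta$ gives $|\phi(\gamma)-\phi(\delta)|\leq 3\ell$. If $\phi(\delta)>3\ell$, swap the roles of $\gamma$ and $\delta$ and apply the same corollary. Finally, if both $\phi(\gamma)\leq 3\ell$ and $\phi(\delta)\leq 3\ell$, the bound $|\phi(\gamma)-\phi(\delta)|\leq 3\ell$ is trivial.

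Next, I would chain this one-step bound along a geodesic. Let $n = d_{\mc A(S,p)}(\gamma,\delta)$ and fix a geodesic edge-path $\gamma = \gamma_0, \gamma_1, \ldots, \gamma_n = \delta$ in $\mc A(S,p)$. By definition of the edge relation, consecutive vertices $\gamma_i$ and $\gamma_{i+1}$ correspond to disjoint arcs, so the one-step bound gives $|\phi(\gamma_i)-\phi(\gamma_{i+1})|\leq M$ for each $0\leq i\leq n-1$. The triangle inequality for the absolute value then yields
\[
|\phi(\gamma)-\phi(\delta)| \leq \sum_{i=0}^{n-1} |\phi(\gamma_i)-\phi(\gamma_{i+1})| \leq nM = M\cdot d_{\mc A(S,p)}(\gamma,\delta),
\]
which rearranges to the desired inequality.

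There is no real obstacle here since all the substantive work has been done in Corollary~\ref{cor:BoundonStartsLike}. The only mild care needed is the case split on whether $\phi(\gamma)$ or $\phi(\delta)$ exceeds $3\ell$, since the corollary is stated with a hypothesis on the vertex whose starts-like value is large. This is easily handled by symmetry, as described above.
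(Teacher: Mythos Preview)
Your proposal is correct and follows essentially the same strategy as the paper: establish the one-step bound $|\phi(\gamma)-\phi(\delta)|\leq M$ for adjacent vertices via Corollary~\ref{cor:BoundonStartsLike}, then chain along a geodesic using the triangle inequality. The only minor difference is in handling the hypothesis of the corollary: you use a direct case split on whether $\phi(\gamma)$ or $\phi(\delta)$ exceeds $3\ell$ (with the remaining case trivial since both values lie in $[0,3\ell]$), whereas the paper instead applies a power of $g$ and uses Lemma~\ref{lem:shiftstartslike} together with the $\Map(S,p)$--invariance of distance to reduce to the case $\phi(\gamma)\geq M$.
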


\begin{proof}
Suppose $d_{\mc A(S,p)}(\gamma,\delta)=1$, and assume without loss of generality that $\phi(\delta)\leq \phi(\gamma)$.  If $\phi(\gamma)=j$, then $g^k(\gamma)$ begins like $g^k(\alpha_j)=\alpha_{j+k}$ for any $k\geq 0$, by Lemma~\ref{lem:shiftstartslike}.  Since $d_{\mc A(S,p)}(-,-)$ is $\Map(S,p)$--invariant, by replacing $\gamma$ and $\delta$ with $g^k(\gamma)$ and $g^k(\delta)$, respectively, we may assume that $j\geq M$.   By Corollary~\ref{cor:BoundonStartsLike}, it follows that $\vert \phi(\delta)-\phi(\gamma)\vert\leq M$.  We conclude using the subadditivity of the absolute value.
\end{proof}

\begin{proof}[Proof of Theorem~\ref{thm:main}]
It suffices to show that $\left(\alpha_j\right)_{j\geq M}$ is a quasi-geodesic half-axis for $g$.  To see this, note that this is an unbounded orbit of $g$, so $g$ is not elliptic, and since $g$ acts as translation along this half-axis, it cannot be parabolic. 

Let $f\colon\Z_{\ge 0}\to \mc A(S,p)$ be the map $i\mapsto \alpha_{M+i}=g^{M+i}(\alpha_0)$.  Since $\phi(\alpha_j)=j$ for any $j\geq 0$, it follows from Lemma~\ref{lem:phitoASp} that for all $i\geq 0$, \[d_{\mc A(S,p)}(\alpha_{M},\alpha_{M+i})\geq \frac{i}{M}. \]
Moreover, the arc $\beta=P_s0_o1_o1_u0_oP_s$ is disjoint from $\alpha_0$ and $\alpha_1$, and so $g^j(\beta)$ is disjoint from $\alpha_j$ and $\alpha_{j+1}$ for all $j\in\Z$.  Thus $d_{\mc A(S,p)}(\alpha_j,\alpha_{j+1})\leq 2$, and so for all $i\geq 0$,
\[d_{\mc A(S,p)}(\alpha_{M},\alpha_{M+i})\leq 2i.\]  Therefore, the map $f$ is a $(2M,0)$--quasi-isometric embedding.
\end{proof}

\vspace{.25 in}
%%%%%%%%%%%%%%%%%%%%%%%%%%%%%%%%%%

\bibliographystyle{abbrv}
\bibliography{Biblio}

\end{document}